\newtheorem{lemma}{Lemma}
\newtheorem{definition}{Definition}
\newtheorem{theorem}{Theorem}
\numberwithin{figure}{section}
\numberwithin{equation}{section}
\numberwithin{lemma}{section}
\numberwithin{theorem}{section}
\numberwithin{definition}{section}
\begin{document}

\title{Supplementary Material to: Realization of Three-port Spring Networks with Inerter for Effective Mechanical Control}


\author{Michael Z. Q. Chen$^{\dag,*}$,~ Kai Wang$^{\dag}$,  ~ Yun Zou$^{\ddag}$,~ and~ Guanrong Chen$^{\S}$
\thanks{This research was partially supported by the Hong Kong Research Grants Council under the GRF Grant CityU1109/12 and the National Science Foundation of China under Grants 61374053 and 61174038.}
\thanks{$^{\dag}$Department of Mechanical Engineering, The University of Hong Kong, Pokfulam
Road, Hong Kong.}
\thanks{$^{\ddag}$School of Automation, Nanjing University of Science and Technology, Nanjing, P. R. China.}
\thanks{$^{\S}$Department of Electronic Engineering, City University of Hong Kong, Tat Chee Avenue, Kowloon, Hong Kong.}
\thanks{Correspondence: MZQ Chen, mzqchen@hku.hk.}
}

\maketitle

\begin{abstract}
This is a supplementary material to ``Realization of three-port spring networks with inerter for effective mechanical control'' \cite{WCZC15}, which provides the detailed proofs of some results. For more background information, refer to  \cite{Che07}--\cite{Smi02} and references therein.

\medskip

\noindent{\em Keywords:} Passive network synthesis, mechanical network, inerter,  positive-real function, three-port resistive network.

\end{abstract}

\section{INTRODUCTION}  \label{sec: introduction}
This report presents the proofs of some results as well as several other supplementary discussions of the note ``Realization of three-port spring networks with inerter for effective mechanical control'' \cite{WCZC15}.
One assumes that
the numbering of lemmas, theorems, corollaries, and figures
in this report agrees with that in the original note.

\section{Proof of Theorem~1}

\subsection{Preliminary Lemmas}

\begin{definition} \cite{Boe66}
{Considering a mechanical (electrical) network $N_n$ without levers (transformers), a graph $\mathcal{G}$ named \textit{augmented graph} is formulated by letting each port or each element correspond to an \textit{edge} \cite[pg.~9]{SR61} and letting each node of the network correspond to a \textit{vertex} \cite[pg.~9]{SR61}. The subgraph $\mathcal{G}_p$
that consists of all the edges corresponding to the ports is defined as \textit{port graph}. The subgraph $\mathcal{G}_e$  that consists of all the edges corresponding to the elements is defined as \textit{network graph}. }
\end{definition}

It is assumed that $\mathcal{G}$ is \textit{connected} \cite[pg.~15]{SR61}, which does not affect the results by the knowledge of circuit theory and which guarantees the existence of a \textit{tree} \cite[pg.~24]{SR61}. The basic knowledge of graph theory can be referred to \cite{SR61}.

\begin{lemma}    \label{lemma: well-defined}
{An $n$-port resistive  network with a connected augmented graph $\mathcal{G}$ has a well-defined admittance if and only if its port graph $\mathcal{G}_p$ is made part of a tree of $\mathcal{G}$.}
\end{lemma}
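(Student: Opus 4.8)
The plan is to reduce the statement to a purely graph-theoretic criterion and then verify each implication by elementary circuit analysis. First I would record a standard fact about the connected augmented graph $\mathcal{G}$: since $\mathcal{G}$ is connected, a subgraph is contained in some spanning tree of $\mathcal{G}$ if and only if it contains no loop (any forest extends to a spanning tree). Consequently, ``$\mathcal{G}_p$ is made part of a tree of $\mathcal{G}$'' is equivalent to ``$\mathcal{G}_p$ contains no loop,'' and the lemma becomes: the $n$-port has a well-defined admittance if and only if the port edges form no loop. This reformulation is what I would prove, the key underlying principle being that an admittance description requires the port voltages to be freely (independently) assignable.

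For \emph{necessity} I would argue by contraposition. Suppose $\mathcal{G}_p$ contains a loop $L$, i.e.\ a cycle built entirely from port edges. Kirchhoff's voltage law around $L$ then yields a nontrivial relation $\sum_{e\in L}\sigma_e v_e=0$ among the port voltages, with $\sigma_e\in\{+1,-1\}$ the orientation signs. Hence the port-voltage vector is confined to a proper subspace and cannot be prescribed arbitrarily, so no matrix $Y$ satisfying $i=Yv$ for all port excitations $v$ is uniquely determined; the admittance is not well-defined.

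For \emph{sufficiency}, suppose $\mathcal{G}_p$ lies in a spanning tree $T=\mathcal{G}_p\cup T_e$, with $T_e$ the element tree-branches and chords $C$ all elements. I would treat the port edges as applied voltage sources and the elements as (generalized) resistors, and set up the network equations relative to $T$ via the fundamental cutset matrix $Q_f=[\,I\ \ Q_{fC}\,]$. The chord voltages are linear in the tree-branch voltages, and Ohm's law on the chords expresses the tree-branch currents through $-Q_{fC}G_CQ_{fC}^{T}v_T$, where $G_C$ is the diagonal chord-admittance matrix. Splitting $v_T=[v_p;\,v_r]$ into port and element tree-branch parts and imposing Ohm's law $i_r=G_rv_r$ on $T_e$, I obtain a reduced system $(G_r+Q_rG_CQ_r^{T})\,v_r=-Q_rG_CQ_p^{T}\,v_p$ for the unknown $v_r$. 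The port voltages $v_p$ may be assigned freely (tree-branch voltages obey no loop constraint), and once $v_r$ is determined the port currents follow as $i_p=Yv_p$ for an explicit matrix $Y$.

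The main obstacle is to guarantee that this reduced system is uniquely solvable, i.e.\ that $M:=G_r+Q_rG_CQ_r^{T}$ is nonsingular; this is the technical heart of sufficiency. I expect to settle it by noting that $G_r$ is diagonal with positive entries while $Q_rG_CQ_r^{T}$ is positive semidefinite (since $G_C$ is diagonal with positive entries), so $M$ is positive definite and hence invertible. This produces a unique $v_r$, and therefore a unique, well-defined admittance $Y$, completing the argument. For $s$-dependent element admittances the same computation applies for generic $s$, the requisite positivity being inherited from positive-realness in the open right half-plane.
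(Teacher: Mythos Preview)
Your argument is correct. The necessity direction is essentially identical to the paper's: both observe that a loop among the port edges forces a KVL dependence on the port voltages, contradicting the independence required for a well-defined admittance, and both use the elementary fact that a loop-free subgraph of a connected graph extends to a spanning tree.

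For sufficiency the two arguments are duals of one another. The paper works with the fundamental \emph{loop} matrix $\hat{B}=[B,F]$ relative to a tree containing $\mathcal{G}_p$; since every chord is then an element edge, $B$ contains an identity block and has full row rank, so $BRB^{T}$ is nonsingular and the admittance is $F^{T}(BRB^{T})^{-1}F$ (invoking the standard formula from Boesch). You instead use the fundamental \emph{cutset} matrix, partition the tree branches into port and element parts, and eliminate the element tree-branch voltages via the reduced system $(G_r+Q_rG_CQ_r^{T})v_r=-Q_rG_CQ_p^{T}v_p$; invertibility follows from positive definiteness of $G_r+Q_rG_CQ_r^{T}$. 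The paper's loop-matrix route is slightly shorter because it appeals to an existing admittance formula; your cutset route is more self-contained and makes the positive-definiteness mechanism explicit. Either way the crux is the same: putting all ports in the tree forces every chord to be an element, which is exactly what makes the relevant coefficient matrix nonsingular.
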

\begin{proof}
\textit{Necessity.} Suppose that the admittance of the $n$-port resistive network exists. By \cite[pg.~192]{Boe66}, the voltages of the $n$ ports are independent. Therefore, there is no \textit{circuit} \cite[pg.~15]{SR61} for port graph $\mathcal{G}_p$; otherwise, it would contradict with Kirchhoff's Voltage Law. It immediately follows  from \cite[pg.~27]{SR61} that $\mathcal{G}_p$ must be  part of a tree of the augmented graph $\mathcal{G}$.

\textit{Sufficiency.} $\hat{B}=[B,F]$ denotes the \textit{fundamental circuit matrix} \cite[pg.~91]{SR61} of the augmented graph $\mathcal{G}$, where the columns of $B$ correspond to the elements, and the columns of $F$ correspond to the ports.
It is obvious that the dimension of $B$ is $(n_e + n - n_v + 1)\times n_e$ and the dimension of $F$ is $(n_e + n - n_v + 1)\times n$, where $n_e$ and $n_v$  denote respectively  the number of edges and vertices  of $\mathcal{G}$.  $R$ denotes an $n_e\times n_e$ diagonal matrix, whose diagonal entries are impedances of the elements corresponding to the columns of $B$. Since the port graph $\mathcal{G}_p$ is  part of a tree $\mathcal{T}$ of the augmented graph $\mathcal{G}$, all the \textit{chords}  \cite[pg.~26]{SR61} of $\mathcal{T}$ must be the edges of the network graph $\mathcal{G}_e$. Then, it follows from \cite[pg.~93]{SR61} that $B$ must contain a non-singular submatrix of order $(n_e + n - n_v + 1)$, implying that the rank of $B$ is $(n_e + n - n_v + 1)$. Therefore, $BRB^{T}$ is non-singular. Consequently, based on  the discussion in \cite{Boe66}, the admittance must be well-defined (exist) and must be equal to $F^{T}(BRB^{T})^{-1}F$.
\end{proof}

\begin{definition}  \cite{BT61}
{A tree with all branches incident to a common vertex is named an \textit{L-tree} (\textit{Lagrangian-tree}); a tree whose branches constitute a path is named a \textit{P-tree} (\textit{Path-tree}).}
\end{definition}

\begin{lemma} \cite[pg.~35]{BT61}  \label{lemma: Preliminary L-tree condition}
{A real symmetric $n\times n$ matrix $\mathcal{A}=[a_{ij}]_{n\times n}$ is realizable as the admittance of an $n$-port resistive network, whose augmented graph contains $n+1$ vertices and whose port graph is an L-tree, if and only if the following conditions hold simultaneously: 1) The sign pattern of $\mathcal{A}$ must be such that after a finite number of \textit{cross-sign changes} \cite[pg.~33]{BT61} all the off-diagonal entries of $\mathcal{A}$ are non-positive;
2) $2a_{ii} \geq \sum_{k=1}^n |a_{ik}|$ for all $i \in 1, 2, ... , n$. }
\end{lemma}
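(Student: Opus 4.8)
The plan is to translate the two abstract conditions into explicit statements about resistor conductances by writing down the admittance matrix supplied by Lemma~\ref{lemma: well-defined}. Because the port graph is an L-tree on the $n+1$ vertices, it is a star centred at a common datum vertex $0$, with port $i$ being the edge from $0$ to the $i$-th outer vertex; since this tree already spans all $n+1$ vertices it is a full tree of $\mathcal{G}$, and every resistor is a chord. Consequently each fundamental circuit contains exactly one resistor, so in $\hat{B}=[B,F]$ the block $B$ (after ordering the circuits by their defining chords) is the identity, $BRB^{T}=R$, and the admittance reduces to $\mathcal{A}=F^{T}GF$, where $G=R^{-1}$ is the diagonal matrix of resistor conductances $g_k>0$.

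First I would read off the structure of $F$. The fundamental circuit of a resistor joining outer vertices $i$ and $j$ consists of that resistor together with ports $i$ and $j$, so its row of $F$ is $\pm(e_i-e_j)$; the circuit of a resistor joining $i$ to the datum is just the resistor and port $i$, so its row is $\pm e_i$. Writing $f_k$ for the $k$-th row, $\mathcal{A}=\sum_k g_k f_k f_k^{T}$, which is precisely the nodal conductance matrix: with $g_{ij}\ge 0$ the total conductance between $i$ and $j$ and $g_{i0}\ge 0$ that between $i$ and the datum, one obtains $a_{ij}=-g_{ij}$ for $i\neq j$ and $a_{ii}=g_{i0}+\sum_{j\neq i}g_{ij}$. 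The only remaining freedom is the reference orientation of each port: reversing port $i$ sends $(V_i,I_i)\mapsto(-V_i,-I_i)$, hence $\mathcal{A}\mapsto D\mathcal{A}D$ with $D=\mathrm{diag}(\pm1)$, which is exactly a cross-sign change; I would record that this operation is physically admissible and leaves both realizability and the two conditions invariant.

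For necessity, after orienting the ports so that the resistor-between-ports rows of $F$ take the form $e_i-e_j$, the identities above give $a_{ij}=-g_{ij}\le 0$ (condition~1, up to cross-sign changes) and $2a_{ii}-\sum_{k=1}^{n}|a_{ik}|=g_{i0}\ge 0$ (condition~2). For sufficiency I would run this backwards: given conditions~1 and~2, first apply the cross-sign changes guaranteed by condition~1 to make all off-diagonal entries non-positive, then set $g_{ij}:=-a_{ij}\ge 0$ for each pair $i\neq j$ and $g_{i0}:=2a_{ii}-\sum_{k}|a_{ik}|\ge 0$, and connect resistors of these conductances accordingly (omitting any with vanishing conductance). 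The formula $\mathcal{A}=\sum_k g_k f_k f_k^{T}$ then certifies that this star-plus-mesh network realizes the sign-changed matrix, and undoing the cross-sign changes (re-reversing the same ports) realizes the original $\mathcal{A}$.

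The main obstacle is the bookkeeping around the cross-sign changes: I would need to verify that the orientation reversals used to bring the off-diagonals to non-positive sign are mutually consistent across all ports simultaneously, i.e.\ that condition~1 is genuinely achievable by a single diagonal $\pm1$ congruence rather than by conflicting row/column flips, and that the $D$ extracted in the necessity direction is the one that can be undone in the sufficiency direction. Handling degenerate entries (vanishing $g_{ij}$ or $g_{i0}$, and the possibility $a_{ii}=0$) and confirming that the constructed graph is still a legitimate augmented graph with the prescribed L-tree port graph are the remaining details to check.
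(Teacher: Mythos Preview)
The paper does not prove this lemma at all; it is quoted verbatim from \cite[pg.~35]{BT61} and used as a black box in the proofs of Lemmas~\ref{lemma: L-tree condition} and~\ref{lemma: diagnoal entries being zero}. So there is nothing to compare against, and your proposal stands on its own.

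Your argument is sound. Since the $L$-tree on $n+1$ vertices already spans the augmented graph, every resistor is a chord, the block $B$ of the fundamental circuit matrix is (after ordering) the identity, and $\mathcal{A}=F^{T}GF$ collapses to the nodal conductance form $a_{ij}=-g_{ij}$, $a_{ii}=g_{i0}+\sum_{j\neq i}g_{ij}$; both conditions then drop out exactly as you compute.

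The obstacle you flag is not a real one. A \emph{cross-sign change} in \cite{BT61} is by definition the congruence $\mathcal{A}\mapsto D\mathcal{A}D$ with $D=\mathrm{diag}(d_1,\dots,d_n)$, $d_i\in\{\pm1\}$: row $i$ and column $i$ are flipped together, so there is no possibility of ``conflicting'' flips. In the necessity direction, with all ports oriented away from the datum vertex you already have $a_{ij}=-g_{ij}\le 0$, so the identity $D$ works; whatever nonstandard orientations the given network carries are undone by choosing $d_i$ accordingly, and this choice is made independently port by port. In the sufficiency direction, Condition~1 hands you such a $D$ by hypothesis; you do not have to decide when it exists (that characterisation, e.g.\ $a_{12}a_{13}a_{23}\le 0$ for $n=3$, is a separate matter dealt with in Lemma~\ref{lemma: L-tree condition}). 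The degenerate cases ($g_{ij}=0$ or $g_{i0}=0$) simply mean the corresponding resistor is absent, which does not disturb the augmented graph having the prescribed $L$-tree port graph.
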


From the discussion in  \cite[pp.~34--35]{BT61}, the following lemma can be easily obtained.\footnote{Lemma~\ref{lemma: Preliminary L-tree condition values of elements} is also given in Page~303 of ``E. A. Guillemin, ``On the analysis and synthesis of single-element-kind networks,'' \textit{IRE Trans. Circuit Theory}, vol.~7, no.~3, pp.~303--312, 1960.''}

\begin{lemma}     \label{lemma: Preliminary L-tree condition values of elements}
{If $\mathcal{A}=[a_{ij}]_{n\times n}$ satisfies the conditions of Lemma~\ref{lemma: Preliminary L-tree condition}, then the values of conductances between each pair of vertices (totally $n(n+1)/2$ pairs) are
$|a_{ij}|$ for $i,j \in 1, 2, ... , n$ with $i < j$, and $(2a_{ii} - \sum_{k=1}^n |a_{ik}|)$ for all $i \in 1, 2, ... , n$, which must be uniquely determined.}
\end{lemma}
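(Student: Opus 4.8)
The plan is to work directly with the nodal (indefinite admittance) description of any realizing network. Since the augmented graph has exactly $n+1$ vertices and the port graph is an L-tree, I would label the vertices $0,1,\dots,n$ so that vertex $0$ is the common vertex to which every port branch is incident. Grounding this datum vertex then identifies the $i$-th port voltage with the node voltage at vertex $i$, so that the $n\times n$ port admittance $\mathcal{A}$ is obtained from the $(n+1)\times(n+1)$ indefinite admittance matrix simply by deleting the row and column of vertex $0$.

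First I would record the standard relation between $\mathcal{A}$ and the pairwise conductances $g_{ij}$ (one for each of the $0\le i<j\le n$): each off-diagonal entry satisfies $a_{ij}=-g_{ij}$ for $1\le i<j\le n$, while each diagonal entry collects every conductance meeting vertex $i$, namely $a_{ii}=g_{i0}+\sum_{j\neq i}g_{ij}$. By Lemma~\ref{lemma: Preliminary L-tree condition}, after the admissible cross-sign changes all off-diagonal entries are non-positive, whence $g_{ij}=-a_{ij}=|a_{ij}|$. Because a cross-sign change only flips the signs of a matched row/column pair and leaves $|a_{ij}|$ untouched, this value is independent of which sign changes are performed, which is precisely why the absolute value appears in the statement.

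Next I would solve for the datum conductances. Substituting $g_{ij}=|a_{ij}|$ into the diagonal relation gives $g_{i0}=a_{ii}-\sum_{j\neq i}|a_{ij}|$, and since condition 2) of Lemma~\ref{lemma: Preliminary L-tree condition} forces $a_{ii}\ge 0$ one has $|a_{ii}|=a_{ii}$, so $\sum_{k=1}^{n}|a_{ik}|=a_{ii}+\sum_{j\neq i}|a_{ij}|$. Rearranging yields the claimed formula $g_{i0}=2a_{ii}-\sum_{k=1}^{n}|a_{ik}|$, and condition 2) guarantees this quantity is non-negative, so it is a legitimate conductance value. A quick count confirms that the $\binom{n}{2}$ non-datum conductances together with the $n$ datum conductances account for all $n(n+1)/2$ vertex pairs.

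Finally, for uniqueness I would observe that the two relations above exhibit an explicit, invertible linear correspondence between the $n(n+1)/2$ conductances and the $n(n+1)/2$ independent entries of the symmetric matrix $\mathcal{A}$; inverting it leaves no freedom, so the values are forced. The step I expect to require the most care is the bookkeeping around the cross-sign changes---verifying that the absolute-value expressions are genuinely invariant under every admissible sign pattern, and that the datum-conductance formula is the correct combination once the diagonal relation is re-expressed through $\sum_{k}|a_{ik}|$---whereas the admittance set-up and the uniqueness count are routine.
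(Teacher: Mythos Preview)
Your argument is correct. The paper does not actually supply its own proof of this lemma; it simply records that the statement ``can be easily obtained'' from the discussion in Brown--Tokad \cite[pp.~34--35]{BT61} (and notes in a footnote that it also appears in Guillemin). Your nodal-matrix derivation---ground the common L-tree vertex, read off $g_{ij}=-a_{ij}=|a_{ij}|$ from the off-diagonal entries and $g_{i0}=a_{ii}-\sum_{j\neq i}|a_{ij}|$ from the diagonal, then rewrite the latter as $2a_{ii}-\sum_k|a_{ik}|$---is exactly the standard computation those references carry out, so your proposal is essentially the proof the paper defers to.
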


\begin{lemma} \cite[pg.~34]{BT61}
\label{lemma: Preliminary P-tree condition}
{A real symmetric $n\times n$ matrix $\mathcal{A}=[a_{ij}]_{n\times n}$ is realizable as the admittance of an $n$-port resistive network, whose augmented graph contains $n+1$ vertices and whose port graph is an P-tree, if and only if after a finite number of cross-sign changes and a proper rearrangement of rows and corresponding columns the entries satisfy $a_{i, j-1} - a_{i, j} \geq a_{i-1, j-1} - a_{i-1, j}$, where $a_{0, i} = a_{j, n+1} := 0$, for $i, j \in 1, 2, \ldots, n+1$ with $i < j$.}
\end{lemma}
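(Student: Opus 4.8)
The plan is to reduce the P-tree problem to the already-characterized L-tree problem of Lemma~\ref{lemma: Preliminary L-tree condition} by a fixed congruence, and then translate the resulting sign conditions into the stated second-difference inequalities. First I would model the underlying network as nonnegative conductances placed on the edges of the complete graph on the $n+1$ vertices, and let $Y$ be its reduced node-admittance (Laplacian) matrix with respect to the reference vertex chosen as one endpoint of the path. Writing $v$ for the vector of port (edge) voltages and $V$ for the vector of node potentials, the L-tree (star) connection makes the port voltages equal to the node potentials, so its admittance is exactly $Y$; for the P-tree (path) connection the consecutive differences give $V = T v$, where $T$ is the lower-triangular all-ones (cumulative-sum) matrix. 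Using the dual current relation $i^{P} = T^{T} I$ (which follows from power invariance $I^{T}V = (i^{P})^{T}v$), the P-tree admittance is $T^{T} Y T$. Hence $\mathcal{A}$ is realizable by a P-tree network if and only if $B := S^{T}\mathcal{A}S$ is realizable by an L-tree network, where $S := T^{-1}$ is the first-difference (bidiagonal $1,-1$) matrix.

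Next I would apply Lemma~\ref{lemma: Preliminary L-tree condition} and Lemma~\ref{lemma: Preliminary L-tree condition values of elements} to $B=[b_{ij}]$. A direct expansion gives the mixed second difference
\[
b_{ij} \;=\; a_{i,j} - a_{i+1,j} - a_{i,j+1} + a_{i+1,j+1},
\]
with the boundary entries of $\mathcal{A}$ set to zero. The requirement of Lemma~\ref{lemma: Preliminary L-tree condition} that every off-diagonal entry of $B$ be non-positive then becomes precisely the inequalities $a_{i,j-1}-a_{i,j}\ge a_{i-1,j-1}-a_{i-1,j}$ after the relabeling $i\mapsto i-1$, $j\mapsto j-1$, while the diagonal-dominance requirement $2b_{ii}\ge\sum_{k}|b_{ik}|$ becomes the boundary instances of the same inequalities — this is exactly where the conventions $a_{0,i}=a_{j,n+1}:=0$ enter, since they encode the (non-port) edges incident to the two endpoints of the path. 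Moreover, by Lemma~\ref{lemma: Preliminary L-tree condition values of elements} each conductance equals the slack of one of these inequalities, so the realizing network is uniquely determined; the case $n=2$ can be checked by hand to confirm that the three slacks are exactly the three conductances $g_{01},g_{02},g_{12}$.

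Finally I would account for the two auxiliary operations in the statement. A cross-sign change corresponds to reversing the reference orientation of a port, i.e.\ to a diagonal $\pm1$ congruence, and a rearrangement of rows and corresponding columns corresponds to the choice of ordering of the vertices along the path; in both directions of the equivalence one first normalizes $\mathcal{A}$ by these operations into the canonical path orientation and then applies the congruence by $S$. For necessity, a realizing network yields nonnegative slacks, hence the inequalities; for sufficiency, defining the conductances as those slacks produces a nonnegative-conductance network whose P-tree admittance reconstructs $\mathcal{A}$ through $\mathcal{A}=T^{T}BT$, using that $B$ meets the hypotheses of Lemma~\ref{lemma: Preliminary L-tree condition}.

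I expect the main obstacle to be the combinatorial bookkeeping in the second and third steps: matching the forward-difference indexing produced by the congruence against the backward-difference form with the two distinct boundary conventions in the statement, and verifying that the cross-sign changes and the row/column rearrangement commute appropriately with the fixed congruence $S$, so that ``normalize then transform'' agrees with ``transform then normalize''. The conceptual reduction in the first step is clean and essentially forced; the delicate part is confirming that the diagonal-dominance half of the L-tree conditions maps onto exactly the boundary cases of the stated inequalities, with no condition lost or double-counted.
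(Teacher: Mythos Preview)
The paper does not supply a proof of this lemma: it is quoted from \cite[pg.~34]{BT61} and used as a black box, just like the companion Lemma~\ref{lemma: Preliminary P-tree condition values of elements}. There is therefore no proof in the paper to compare against.

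That said, your reduction is correct and is essentially the classical argument. The congruence $\mathcal{A}=T^{T}YT$ (equivalently $Y=S^{T}\mathcal{A}S$) with $T$ the cumulative-sum matrix identifies the P-tree admittance with the reduced Laplacian $Y$, and the mixed second differences $b_{ij}=a_{i,j}-a_{i,j+1}-a_{i+1,j}+a_{i+1,j+1}$ are exactly the entries of $Y$; non-positivity of the off-diagonal entries and non-negativity of the row sums of $Y$ then unwind, after the shift $(i,j)\mapsto(i-1,j-1)$, to the stated inequalities, with the $i=1$ boundary cases matching the diagonal-dominance (row-sum) conditions via the symmetry $a_{k,1}=a_{1,k}$. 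One clarification for when you write it out in full: you do not need cross-sign changes on $B$ to commute with $S$ (indeed $DS=SD'$ forces $D=\pm I$, so they do not). The point is rather that the ``rearrangement of rows and corresponding columns'' together with the cross-sign changes on $\mathcal{A}$ amount to choosing the labeling and orientation of the path edges; once that normalization is made, $B=S^{T}\mathcal{A}S$ is literally the reduced Laplacian of a non-negative conductance network, so its off-diagonal entries are automatically $\le 0$ and the cross-sign-change clause of Lemma~\ref{lemma: Preliminary L-tree condition} is never invoked on the L-tree side.
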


\begin{lemma}  \cite[pg.~34]{BT61}
\label{lemma: Preliminary P-tree condition values of elements}
{If $\mathcal{A}=[a_{ij}]_{n\times n}$ satisfies $a_{i, j-1} - a_{i, j} \geq a_{i-1, j-1} - a_{i-1, j}$, where $a_{0, i} = a_{j, n+1} := 0$, for all $i, j \in 1, 2, \ldots, n+1$ with $i < j$, then the port edges are ordered and oriented to the same direction, and the values of conductances between each pair of vertices (totally $n(n+1)/2$ pairs) are $g_{i,j} = (a_{i, j-1} - a_{i, j}) - (a_{i-1, j-1} - a_{i-1, j})$, which must be uniquely determined.}
\end{lemma}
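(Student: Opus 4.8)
The plan is to realize $\mathcal{A}$ by explicit node analysis of the candidate P-tree network, to identify the stated $g_{i,j}$ as the entries read off from the reduced node-conductance matrix, and then to deduce uniqueness from an invertible congruence. First I would fix the geometry forced by the P-tree hypothesis: the augmented graph has $n+1$ vertices, so label them $1,2,\dots,n+1$ in such a way that the $n$ port edges form the path $1\!-\!2\!-\!\cdots\!-\!(n+1)$, with port $k$ joining vertices $k$ and $k+1$ and all port edges oriented in the same direction. This is exactly the ordering and common orientation asserted in the statement, produced by the rearrangement of rows and corresponding columns in Lemma~\ref{lemma: Preliminary P-tree condition}. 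Grounding vertex $n+1$ and writing $\phi=(\phi_1,\dots,\phi_n)^T$ for the remaining node potentials and $V=(V_1,\dots,V_n)^T$ for the port voltages, the path structure gives $V_k=\phi_k-\phi_{k+1}$ with $\phi_{n+1}=0$, i.e. $V=M\phi$ with $M$ the upper-bidiagonal difference matrix $M_{kk}=1$, $M_{k,k+1}=-1$. Since the port graph is a path (a tree), $M$ is upper-triangular with unit diagonal and hence invertible.

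Next I would connect the port admittance $\mathcal{A}$ to the conductances through the reduced node-conductance (Laplacian) matrix $Y$ of the resistive network on the $n+1$ vertices: for $i\neq j$ the entry $Y_{ij}$ equals minus the conductance between vertices $i$ and $j$, and the row sums of $Y$ equal the conductances to the datum vertex $n+1$. Conservation of power, $V^TI=\phi^TJ$ for all states (with $I$ the port currents and $J$ the node current injections), together with $V=M\phi$ forces the congruence $\mathcal{A}=M^{-T}YM^{-1}$, equivalently $Y=M^T\mathcal{A}M$. Expanding this product, with the boundary convention $a_{0,\cdot}=a_{\cdot,n+1}=0$, gives
\begin{equation}
Y_{ij}=(a_{ij}-a_{i,j-1})-(a_{i-1,j}-a_{i-1,j-1}),
\end{equation}
so the conductance between vertices $i$ and $j$ is $-Y_{ij}=(a_{i,j-1}-a_{i,j})-(a_{i-1,j-1}-a_{i-1,j})$, which is precisely $g_{i,j}$; the conductances to the datum vertex emerge from the telescoping row sums as the same formula with $j=n+1$. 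The hypothesis $a_{i,j-1}-a_{i,j}\ge a_{i-1,j-1}-a_{i-1,j}$ then reads exactly as $g_{i,j}\ge 0$, so the formula delivers admissible (non-negative) element values and the network is genuinely realizable.

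Finally, for uniqueness I would note that since $M$ is invertible the map $\mathcal{A}\mapsto Y=M^T\mathcal{A}M$ is a bijection, so $Y$ is determined by $\mathcal{A}$; and a reduced Laplacian determines the conductance across each vertex pair uniquely, because the realization assigns a single element to each pair (parallel conductances would only merge into one). A dimension count confirms there is no slack: the $n(n+1)/2$ independent entries of the symmetric matrix $\mathcal{A}$ are in bijection with the $n(n+1)/2$ pairwise conductances. I expect the main obstacle to be the careful bookkeeping of indices, boundary conventions, and port orientations so that the telescoping sums in $Y=M^T\mathcal{A}M$ collapse exactly to the stated $g_{i,j}$ (including the datum-vertex conductances), rather than any conceptual difficulty; the invertibility of $M$ does the real work for both the values and their uniqueness.
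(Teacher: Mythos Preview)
The paper itself gives no proof of this lemma: it is stated with the citation \cite[pg.~34]{BT61} and used as a black box, so there is no in-paper argument to compare yours against. That said, your approach is correct. The congruence $Y=M^{T}\mathcal{A}M$ with the bidiagonal path-difference matrix $M$ is the standard way to pass from the port admittance to the reduced node-conductance matrix of a P-tree network; expanding it gives exactly $-Y_{ij}=(a_{i,j-1}-a_{i,j})-(a_{i-1,j-1}-a_{i-1,j})=g_{i,j}$ for $i<j\le n$, and the telescoping row sum $\sum_{j=1}^{n}Y_{ij}=a_{i,n}-a_{i-1,n}$ recovers the conductance $g_{i,n+1}$ to the datum vertex, which is the same formula with $j=n+1$. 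Invertibility of $M$ then forces uniqueness of $Y$, and hence of all $n(n+1)/2$ pairwise conductances, from $\mathcal{A}$. The only points requiring care are precisely those you flag: the sign and orientation conventions ensuring that $J=M^{T}I$ and $V=M\phi$ hold simultaneously, and the boundary conventions $a_{0,\cdot}=a_{\cdot,n+1}=0$ that make the second differences and the telescoping sums come out right.
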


\begin{lemma}  \label{lemma: L-tree condition}
{A third-order real  symmetric  matrix $Y_N$ in the form of
\begin{equation} \label{eq: Y_N}
Y_N=\left[
    \begin{array}{ccc}
      y_{11} & y_{12} & y_{13} \\
      y_{12} & y_{22} & y_{23} \\
      y_{13} & y_{23} & y_{33} \\
    \end{array}
  \right]
\end{equation}
can be realized as the admittance of a three-port resistive network with at most three elements, whose augmented graph $\mathcal{G}$ contains four vertices  and  whose port graph $\mathcal{G}_p$ is an L-tree of $\mathcal{G}$, if and only if the following conditions hold simultaneously: 1)~$y_{12}y_{13}y_{23}\leq 0$; 2)~$y_{11}-|y_{12}|-|y_{13}|\geq 0$, $y_{22}-|y_{12}|-|y_{23}| \geq 0$, and $y_{33}-|y_{13}|-|y_{23}|\geq 0$; 3)~at least three of
$y_{12}$, $y_{13}$, $y_{23}$, $y_{11}-|y_{12}| - |y_{13}|$, $y_{22}-|y_{12}|-|y_{23}|$,
and $y_{33}-|y_{13}|-|y_{23}|$ are zero.
}
\end{lemma}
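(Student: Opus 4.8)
The plan is to obtain Lemma~\ref{lemma: L-tree condition} as the $n=3$ specialization of Lemmas~\ref{lemma: Preliminary L-tree condition} and~\ref{lemma: Preliminary L-tree condition values of elements}, whose hypotheses concern exactly an $n$-port resistive network with $n+1$ vertices and an L-tree port graph. Accordingly, I would split the argument into two parts: first showing that conditions~1) and~2) here are equivalent to the two realizability conditions of Lemma~\ref{lemma: Preliminary L-tree condition} when $n=3$, which handles realizability \emph{per se}; and then using the uniqueness statement of Lemma~\ref{lemma: Preliminary L-tree condition values of elements} to convert the constraint of \emph{at most three elements} into the vanishing condition~3).

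For the first part, I would observe that, since the augmented graph has four vertices and the port graph is an L-tree incident to a common reference vertex, the six vertex pairs produced by Lemma~\ref{lemma: Preliminary L-tree condition values of elements} carry conductances $|y_{12}|,|y_{13}|,|y_{23}|$ together with $y_{11}-|y_{12}|-|y_{13}|$, $y_{22}-|y_{12}|-|y_{23}|$, and $y_{33}-|y_{13}|-|y_{23}|$. Condition~2) of Lemma~\ref{lemma: Preliminary L-tree condition}, $2y_{ii}\ge\sum_{k=1}^3|y_{ik}|$, forces $y_{ii}\ge 0$ and then reduces precisely to the three nonnegativity requirements in condition~2) here, namely that the three reference-vertex conductances be nonnegative. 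For condition~1), the key remark is that a single cross-sign change at an index flips exactly two of the three off-diagonal entries $y_{12},y_{13},y_{23}$, so the product $y_{12}y_{13}y_{23}$ is invariant under every cross-sign change; a short case analysis then shows that the off-diagonal entries can be made simultaneously nonpositive if and only if $y_{12}y_{13}y_{23}\le 0$. This is the step I expect to require the most care, chiefly in treating the degenerate cases where one or more of the $y_{ij}$ vanish, for which flipping a zero entry grants extra freedom. I would also note that $|y_{ij}|$, $y_{ii}$, and the property of an entry being zero are all invariant under cross-sign changes, so conditions~2) and~3) are insensitive to the sign normalization and may be checked after the off-diagonals have been made nonpositive.

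For the second part, once conditions~1) and~2) hold, Lemma~\ref{lemma: Preliminary L-tree condition} guarantees a realization with the prescribed structure, and Lemma~\ref{lemma: Preliminary L-tree condition values of elements} guarantees that the six conductances above are \emph{uniquely} determined. An element is present on a vertex pair exactly when its conductance is nonzero, so the number of elements equals the number of nonzero entries among the six listed quantities (note $y_{ij}=0$ iff $|y_{ij}|=0$). Hence \emph{at most three elements} is equivalent to \emph{at least three of the six quantities vanish}, which is condition~3). Assembling the two directions then completes the proof: for necessity, a realization of the stated type forces conditions~1) and~2) by Lemma~\ref{lemma: Preliminary L-tree condition} and, via the uniqueness of the conductances, forces at least three of them to vanish, giving condition~3); for sufficiency, conditions~1) and~2) yield such a realization while condition~3) caps the number of elements at three.
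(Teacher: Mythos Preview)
Your proposal is correct and follows essentially the same route as the paper: both arguments specialize Lemmas~\ref{lemma: Preliminary L-tree condition} and~\ref{lemma: Preliminary L-tree condition values of elements} to $n=3$, use invariance of the product $y_{12}y_{13}y_{23}$ under cross-sign changes to recast the sign-pattern condition as $y_{12}y_{13}y_{23}\le 0$, and invoke the uniqueness of the six conductance values to translate the ``at most three elements'' constraint into Condition~3). Your write-up is, if anything, slightly more explicit than the paper's about why a cross-sign change flips exactly two off-diagonal entries and about the handling of degenerate (zero) cases, but the logical skeleton is identical.
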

\begin{proof}
\textit{Necessity.}
It is obvious that if a third-order real  symmetric  matrix $Y_N$ is realizable by the three-port resistive network with at most three elements, whose augmented graph $\mathcal{G}$ contains four vertices  and  whose port graph $\mathcal{G}_p$ is an L-tree of $\mathcal{G}$, then $Y_N$ satisfies the conditions of Lemma~\ref{lemma: Preliminary L-tree condition} for $n = 3$, and at least three of conductances between each of vertices are zero.
From the latter constraint, Condition~3 of this lemma is directly obtained based on Lemma~\ref{lemma: Preliminary L-tree condition values of elements}. Moreover, it is easy to check that a finite number of cross-sign changes does not change the fact that $a_{12}a_{13}a_{23} \leq 0$ when $n = 3$. Therefore, Conditions~1 and 2 of this lemma are obtained.

\textit{Sufficiency.}
Since $Y_N$ satisfies Condition~1, there must exist a finite number of cross-sign changes such that $y_{12} \leq 0$, $y_{13} \leq 0$, and $y_{23} \leq 0$. Together with Condition~2, it follows that $Y_N$ satisfies the conditions of Lemma~\ref{lemma: Preliminary L-tree condition}. Due to Lemma~\ref{lemma: Preliminary L-tree condition values of elements}, the values of elements are $|y_{12}|$, $|y_{13}|$, $|y_{23}|$, $(y_{11}-|y_{12}| - |y_{13}|)$, $(y_{22}-|y_{21}|-|y_{23}|)$, and $(y_{33}-|y_{31}|-|y_{32}|)$. From Condition~3,  $Y_N$ is realizable by the three-port resistive network with at most three elements, whose augmented graph $\mathcal{G}$ contains four vertices  and  whose port graph $\mathcal{G}_p$ is an L-tree of $\mathcal{G}$.
\end{proof}

\begin{lemma}  \label{lemma: P-tree condition}
{A third-order real symmetric matrix $Y_N$ in the form of \eqref{eq: Y_N} can be realized as the admittance of a
three-port resistive  network with at most three  elements, whose augmented graph $\mathcal{G}$ contains four
vertices and whose port graph $\mathcal{G}_p$ is a P-tree of $\mathcal{G}$, if and only if $y_{12}y_{13}y_{23} \geq 0$ and at least one of the following three conditions holds with at least three of the six inequality signs being equality:
1)~$-|y_{13}| \leq 0$, $|y_{13}| \leq |y_{12}| \leq y_{11}$, $|y_{13}| \leq |y_{23}| \leq y_{33}$, and
$|y_{12}| + |y_{23}| - |y_{13}| \leq y_{22}$;
2)~$-|y_{12}| \leq 0$, $|y_{12}| \leq |y_{13}| \leq y_{11}$, $|y_{12}| \leq |y_{23}| \leq y_{22}$, and
$|y_{13}| + |y_{23}| - |y_{12}| \leq y_{33}$;
3)~$-|y_{23}| \leq 0$, $|y_{23}|\leq |y_{12}| \leq y_{22}$,  $|y_{23}| \leq |y_{13}| \leq y_{33}$, and
$|y_{12}| + |y_{13}| - |y_{23}| \leq y_{11}$.
}
\end{lemma}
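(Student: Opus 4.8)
The plan is to reduce everything to the general P-tree criterion already in hand, namely Lemma~\ref{lemma: Preliminary P-tree condition} together with the explicit conductance formula of Lemma~\ref{lemma: Preliminary P-tree condition values of elements}, specialized to $n=3$. Writing $\mathcal{A}=[a_{ij}]$ for the matrix obtained from $Y_N$ after a rearrangement of the three ports, the augmented graph has $n+1=4$ vertices, so there are $n(n+1)/2=6$ vertex-pair conductances. Using $a_{0,i}=a_{j,4}:=0$, the formula $g_{i,j}=(a_{i,j-1}-a_{i,j})-(a_{i-1,j-1}-a_{i-1,j})$ specializes to
\begin{gather*}
g_{12}=a_{11}-a_{12},\quad g_{13}=a_{12}-a_{13},\quad g_{14}=a_{13},\\
g_{23}=a_{22}-a_{12}-a_{23}+a_{13},\quad g_{24}=a_{23}-a_{13},\quad g_{34}=a_{33}-a_{23}.
\end{gather*}
By Lemma~\ref{lemma: Preliminary P-tree condition} and Lemma~\ref{lemma: Preliminary P-tree condition values of elements}, the P-tree realization exists exactly when these six quantities are simultaneously nonnegative for some cross-sign change and port rearrangement, and their values are uniquely determined; the extra ``at most three elements'' requirement is precisely the statement that at least three of the $g_{i,j}$ vanish. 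So the lemma will follow once I translate ``all six $g_{i,j}\ge 0$ with at least three equal to zero'' into conditions on the original entries $y_{ij}$.

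First I would settle the sign condition. A cross-sign change on index $k$ flips exactly the two off-diagonal entries incident to $k$ among $\{y_{12},y_{13},y_{23}\}$ and leaves the diagonal fixed, so it flips an even number of factors and the sign of the product $y_{12}y_{13}y_{23}$ is invariant; a port rearrangement merely permutes these three numbers, so the product itself is invariant. I would then observe that $y_{12}y_{13}y_{23}\ge 0$ is exactly the condition under which cross-sign changes can render all three off-diagonal entries nonnegative: if the product is strictly positive the four sign patterns with an even number of minus signs are mutually reachable and include $(+,+,+)$, while if some entry is zero the remaining two can be flipped independently (a change on an index incident to the zero entry flips only one nonzero off-diagonal entry). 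Conversely, once $g_{14}=a_{13}\ge 0$, $g_{13}=a_{12}-a_{13}\ge 0$, and $g_{24}=a_{23}-a_{13}\ge 0$ force all of $a_{12},a_{13},a_{23}\ge 0$, the product is nonnegative, so the normalized entries satisfy $a_{ij}=|y_{\sigma(i)\sigma(j)}|$ off the diagonal.

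The core step is then a case analysis over the rearrangement $\sigma$. After normalization the realization is determined by which port sits in the central path position (index $2$), and path reversal identifies the two orderings of the end ports, leaving exactly three inequivalent choices. For $\sigma=\mathrm{id}$ (middle port $2$), substituting $a_{12}=|y_{12}|$, $a_{13}=|y_{13}|$, $a_{23}=|y_{23}|$ into the six conditions $g_{i,j}\ge 0$ reproduces Condition~1 verbatim; taking middle port $3$ gives $a_{13}=|y_{12}|$, and the substitution yields Condition~2; middle port $1$ gives Condition~3. Each condition therefore packages exactly the six inequalities $g_{i,j}\ge 0$, so ``at least three of the six inequality signs being equality'' is literally ``at least three conductances vanish,'' i.e.\ at most three elements. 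Necessity follows because a realization supplies such a $\sigma$ and cross-sign change; sufficiency follows because any one of the three conditions lets me run the normalization and permutation backwards to meet the hypotheses of Lemma~\ref{lemma: Preliminary P-tree condition}, with the element values read off uniquely from Lemma~\ref{lemma: Preliminary P-tree condition values of elements}.

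I expect the main obstacle to be the bookkeeping rather than any conceptual difficulty: I must verify carefully that the three middle-port substitutions produce precisely the three listed inequality systems (and not some reindexed variant), and that path-reversal-paired permutations yield identical systems, so that exactly three cases—no more, no fewer—arise. The only other point needing care is the degenerate regime where one off-diagonal entry is zero, where I must confirm that the product-nonnegativity-to-normalizability equivalence still holds; this is handled by the independent-flip observation above.
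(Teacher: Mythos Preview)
Your proposal is correct and follows essentially the same approach as the paper: both reduce to the preliminary P-tree criterion (Lemma~\ref{lemma: Preliminary P-tree condition}) and conductance formula (Lemma~\ref{lemma: Preliminary P-tree condition values of elements}) specialized to $n=3$, identifying the three listed conditions with the three inequivalent port orderings and ``at most three elements'' with ``at least three of the six $g_{i,j}$ vanish.'' Your treatment is considerably more explicit than the paper's terse proof---in particular your analysis of why $y_{12}y_{13}y_{23}\ge 0$ is the exact invariant characterizing normalizability by cross-sign changes, and why path reversal collapses the six permutations to three cases---but the underlying strategy is identical.
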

\begin{proof}
\textit{Necessity.}
It is obvious that if a third-order real  symmetric  matrix $Y_N$ is realizable by the three-port resistive network with at most three elements, whose augmented graph $\mathcal{G}$ contains four vertices  and  whose port graph $\mathcal{G}_p$ is an P-tree of $\mathcal{G}$, then $Y_N$ satisfies the conditions of Lemma~\ref{lemma: Preliminary P-tree condition} for $n = 3$, and at least three of conductances between each of vertices are zero. Together with Lemma~\ref{lemma: Preliminary P-tree condition values of elements}, the conditions of this lemma are obtained.

\textit{Sufficiency.} There must exist a finite number of cross-sign changes and a proper rearrangement of rows and corresponding columns such that $y_{11} - y_{12} \geq 0$, $y_{12} - y_{13} \geq 0$, $y_{13} \geq 0$, $(y_{22} - y_{23}) - (y_{12} - y_{13}) \geq 0$, $y_{23} - y_{13} \geq 0$, and $y_{33} - y_{23} \geq 0$ with at least three of the six inequality signs being equality. Therefore, the condition of Lemma~\ref{lemma: Preliminary P-tree condition} must hold for $n = 3$. Together with Lemma~\ref{lemma: Preliminary P-tree condition values of elements}, $Y_N$ is realizable by the three-port resistive network with at most three elements, whose augmented graph $\mathcal{G}$ contains four vertices  and  whose port graph $\mathcal{G}_p$ is an P-tree of $\mathcal{G}$.
\end{proof}

\begin{lemma}  \label{lemma: diagonal paramount}
{If a third-order real paramount  matrix $Y_N$ in the form of \eqref{eq: Y_N} is a diagonal matrix, then the conditions of Lemma~\ref{lemma: L-tree condition} must hold.}
\end{lemma}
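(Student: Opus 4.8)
The plan is to exploit two facts: that a diagonal matrix has all off-diagonal entries equal to zero, and that paramountcy forces the diagonal entries to be non-negative. The three conditions of Lemma~\ref{lemma: L-tree condition} are phrased entirely in terms of the six quantities $y_{12}$, $y_{13}$, $y_{23}$, $y_{11}-|y_{12}|-|y_{13}|$, $y_{22}-|y_{12}|-|y_{23}|$, and $y_{33}-|y_{13}|-|y_{23}|$, so once these are evaluated under the diagonal hypothesis the verification becomes a direct check of three inequalities and one counting condition.

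First I would record the relevant consequence of paramountcy: by definition every principal minor of a paramount matrix is non-negative, and in particular each first-order principal minor $y_{11}$, $y_{22}$, $y_{33}$ satisfies $y_{ii}\geq 0$. (Indeed, the order-one paramountcy condition already gives $y_{ii}\geq |y_{ij}|\geq 0$.) Next I would substitute the diagonal hypothesis $y_{12}=y_{13}=y_{23}=0$ into the three conditions of Lemma~\ref{lemma: L-tree condition}. Condition~1 then reads $y_{12}y_{13}y_{23}=0\leq 0$, which holds trivially. For Condition~2 the off-diagonal terms vanish, leaving $y_{11}\geq 0$, $y_{22}\geq 0$, and $y_{33}\geq 0$, which are precisely the non-negativities of the diagonal entries established above. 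For Condition~3, the three off-diagonal quantities $y_{12}$, $y_{13}$, $y_{23}$ are simultaneously zero, so at least three of the six listed quantities vanish. Assembling these three verifications completes the argument.

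There is no substantial obstacle here; the only point requiring care is invoking the correct consequence of paramountcy, namely that the diagonal entries, being first-order principal minors, are non-negative. Everything else reduces to setting the off-diagonal entries to zero and reading off the resulting inequalities.
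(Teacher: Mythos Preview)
Your proposal is correct and is exactly the explicit verification the paper omits under the single sentence ``It is obvious.'' Nothing further is needed.
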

\begin{proof}
It is obvious.
\end{proof}

\begin{lemma}  \label{lemma: diagnoal entries being zero}
{For a third-order real paramount matrix $Y_N$ in the form of \eqref{eq: Y_N}, if any one of the diagonal entries $y_{11}$, $y_{22}$, and $y_{33}$ is zero, then the conditions of Lemma~\ref{lemma: L-tree condition} must hold.}
\end{lemma}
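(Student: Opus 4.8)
The plan is to reduce everything to the first-order consequence of paramountcy, namely that each diagonal entry dominates, in absolute value, the off-diagonal entries sitting in its row and column. Since the three cases $y_{11}=0$, $y_{22}=0$, $y_{33}=0$ are interchanged by a simultaneous permutation of rows and columns of $Y_N$—which preserves both paramountcy and the symmetric structure of the three conditions in Lemma~\ref{lemma: L-tree condition}—it suffices to treat a single case, and I would take $y_{11}=0$.

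First I would recall that a real symmetric paramount matrix satisfies, in particular, the order-one paramountcy inequalities $y_{ii}\geq|y_{ij}|$ for all $i,j$: the principal first-order minor $y_{ii}$ must be at least the absolute value of every first-order minor drawn from row $i$. Applying this with $i=1$ together with the hypothesis $y_{11}=0$ gives $|y_{12}|\leq y_{11}=0$ and $|y_{13}|\leq y_{11}=0$, hence $y_{12}=0$ and $y_{13}=0$.

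With these two entries eliminated, the three conditions of Lemma~\ref{lemma: L-tree condition} follow by direct substitution. Condition~1 holds because $y_{12}y_{13}y_{23}=0\leq0$. For Condition~2 the first inequality becomes $y_{11}-|y_{12}|-|y_{13}|=0\geq0$, while the other two reduce to $y_{22}-|y_{23}|\geq0$ and $y_{33}-|y_{23}|\geq0$, each of which is again an instance of the order-one dominance $y_{ii}\geq|y_{ij}|$. Finally, Condition~3 is satisfied because the three quantities $y_{12}$, $y_{13}$, and $y_{11}-|y_{12}|-|y_{13}|$ are all zero, which already gives the required three equalities among the six.

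I do not anticipate a genuine obstacle in this argument; it is essentially a one-line deduction once the right consequence of paramountcy is isolated. The only point needing care is to state precisely which paramountcy inequality is being used—the order-one dominance $y_{ii}\geq|y_{ij}|$—and to note that this same inequality is what certifies the two surviving inequalities of Condition~2 after $y_{12}$ and $y_{13}$ have been set to zero.
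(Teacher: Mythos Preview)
Your proposal is correct and follows essentially the same approach as the paper: use the order-one paramountcy inequalities $y_{ii}\geq|y_{ij}|$ to force the off-diagonal entries in the zero row/column to vanish, then check the three conditions of Lemma~\ref{lemma: L-tree condition} directly. The only cosmetic difference is that the paper treats the case $y_{33}=0$ while you treat $y_{11}=0$, and your symmetry justification for reducing to a single case is slightly more explicit than the paper's ``similarly'' remark.
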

\begin{proof}
Assuming that $y_{33}=0$, the paramountcy of $Y_N$ implies that $y_{13}=y_{23}=0$ and $y_{11} \geq |y_{12}| \geq 0$. Hence, one obtains that $y_{12}y_{13}y_{23}= 0$, $y_{13}=0$, $y_{23}=0$, $y_{11}-|y_{12}| - |y_{13}| = y_{11} - |y_{12}| \geq 0$, $y_{22}-|y_{12}|-|y_{23}| = y_{22} - |y_{12}| \geq 0$, and $y_{33}-|y_{13}|-|y_{23}| = 0$. Thus, the conditions of Lemma~\ref{lemma: L-tree condition} hold. Similarly, the case of $y_{11}=0$ or $y_{22}=0$ can be proved.
\end{proof}

\begin{lemma}   \label{lemma: two rows equal or opposite}
{For a third-order real paramount matrix $Y_N$ in the form of \eqref{eq: Y_N}, if there exist two equal rows or two rows for which one row is the negative of the other, then the conditions of Lemma~\ref{lemma: P-tree condition} must hold.}
\end{lemma}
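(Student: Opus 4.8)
The plan is to verify the hypotheses of Lemma~\ref{lemma: P-tree condition} by direct computation, after first cutting down the number of cases by symmetry. I would begin by recording the two consequences of paramountcy that get used repeatedly: every diagonal entry satisfies $y_{ii}\geq 0$, and every entry is dominated by its diagonal, $y_{ii}\geq|y_{ij}|$, these being exactly the first-order paramountcy inequalities. I would also observe that a simultaneous permutation of rows and columns preserves paramountcy and the symmetric form~\eqref{eq: Y_N}, carries the property ``two equal rows'' (resp.\ ``two opposite rows'') to the same property for the relabelled indices, and permutes the three alternatives of Lemma~\ref{lemma: P-tree condition} among themselves while preserving both the sign of the product $y_{12}y_{13}y_{23}$ and the count of equalities. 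Since the conclusion only asserts that \emph{some} one of those three alternatives holds, it therefore suffices to treat the case in which the special pair is rows~$1$ and~$2$.

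Next I would extract the entry relations forced by the hypothesis. If rows~$1$ and~$2$ coincide, comparing them entrywise gives $y_{11}=y_{12}=y_{22}$ and $y_{13}=y_{23}$; writing $a:=y_{11}=y_{12}=y_{22}$, $b:=y_{13}=y_{23}$, $c:=y_{33}$, paramountcy yields $a\geq 0$, $c\geq 0$, $a\geq|b|$, and $c\geq|b|$. If instead row~$1$ is the negative of row~$2$, the same comparison gives $y_{11}=y_{22}=-y_{12}$ and $y_{13}=-y_{23}$; writing $a:=y_{11}=y_{22}$ and $b:=y_{13}$, so that $y_{12}=-a$ and $y_{23}=-b$, paramountcy again gives $a\geq 0$, $c\geq0$, $a\geq|b|$, and $c\geq|b|$. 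In both cases the product $y_{12}y_{13}y_{23}$ equals $ab^{2}\geq 0$, so the first requirement of Lemma~\ref{lemma: P-tree condition} is met.

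I would then check Condition~1 of Lemma~\ref{lemma: P-tree condition} in both cases. Its six inequalities are $-|y_{13}|\leq 0$, $|y_{13}|\leq|y_{12}|$, $|y_{12}|\leq y_{11}$, $|y_{13}|\leq|y_{23}|$, $|y_{23}|\leq y_{33}$, and $|y_{12}|+|y_{23}|-|y_{13}|\leq y_{22}$. Substituting the relations above, the third reduces to $a\leq a$, the fourth to $|b|\leq|b|$, and the sixth to $a\leq a$, so at least three of the six signs are equalities; the remaining three, namely $-|b|\leq0$, $|b|\leq a$, and $|b|\leq c$, hold by paramountcy. Hence Condition~1 is satisfied with the required number of equalities, and the symmetry reduction of the first paragraph then yields the claim for an arbitrary equal or opposite pair.

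The only real subtlety---and the step I would be most careful about---is the bookkeeping in the symmetry reduction: although the computation is carried out only for Condition~1 with rows~$1$ and~$2$ as the special pair, it is the invariance of the set $\{$Conditions 1--3$\}$ under relabelling that guarantees a general equal (or opposite) pair makes one of them hold, so one must confirm that the transposition fixing Condition~1 is consistent with the chosen labelling and that no pair is left uncovered. By contrast, the sign analysis separating the equal case from the opposite case is routine once the relations $y_{11}=y_{22}=\pm y_{12}$ and $y_{13}=\pm y_{23}$ are in hand, since the absolute values in Lemma~\ref{lemma: P-tree condition} erase the sign discrepancy in precisely the three places that become equalities.
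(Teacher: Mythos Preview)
Your proposal is correct and follows essentially the same approach as the paper: reduce by row/column permutation to the case where rows~1 and~2 are the special pair, read off the entry relations $y_{11}=y_{22}=\pm y_{12}$ and $y_{13}=\pm y_{23}$, and then verify Condition~1 of Lemma~\ref{lemma: P-tree condition} directly, obtaining equalities in exactly the three places $|y_{12}|\le y_{11}$, $|y_{13}|\le|y_{23}|$, and $|y_{12}|+|y_{23}|-|y_{13}|\le y_{22}$. Your treatment of the symmetry reduction---checking that the three alternatives of Lemma~\ref{lemma: P-tree condition} are permuted among themselves---is in fact more explicit than the paper's, which simply remarks that rearranging rows and columns corresponds to swapping ports.
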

\begin{proof}
First, consider the case with two equal rows. If they are assumed to be the first and second rows, together with the symmetry, $Y_N$ can be written as
\begin{equation*}
Y_N = \left[
        \begin{array}{ccc}
          y_{11} & y_{11} & y_{13} \\
          y_{11} & y_{11} & y_{13} \\
          y_{13} & y_{13} & y_{33} \\
        \end{array}
      \right].
\end{equation*}
Since $Y_N$ is paramount, it follows that $y_{12}y_{13}y_{23}=y_{11}y_{13}^2 \geq 0$,
$-|y_{13}| \leq 0$, $y_{11}-|y_{12}|=0$, $|y_{12}|-|y_{13}|=y_{11}-|y_{13}|\geq 0$,
$y_{33}-|y_{23}|=y_{33}-|y_{13}| \geq 0$, $|y_{23}|-|y_{13}| = 0$, and $y_{22} + |y_{13}| - (|y_{12}| + |y_{23}|) = y_{11} + |y_{13}| - (y_{11} + |y_{13}|) = 0$. Hence, the conditions of Lemma~\ref{lemma: P-tree condition} must hold.

When there is another pair of two equal rows,
properly arranging the rows and the corresponding columns of $Y_N$  can always yield another paramount matrix $Y'_N$, whose first and second rows are equal. Hence, $Y'_N$ satisfies the conditions of Lemma~\ref{lemma: P-tree condition} as discussed above. Since arranging the rows and the corresponding columns means swapping the ports of the realization network, one implies that $Y_N$ also satisfies the conditions of Lemma~\ref{lemma: P-tree condition}.

Now, let us to prove the other case with two rows for which one row is the negative of the other.
If they are assumed to be the first and second rows, together with the symmetry, $Y_N$ can be written as
\begin{equation*}
Y_N = \left[
        \begin{array}{ccc}
          y_{11} & -y_{11} & y_{13} \\
          -y_{11} & y_{11} & -y_{13} \\
          y_{13} & -y_{13} & y_{33} \\
        \end{array}
      \right].
\end{equation*}
Since $Y_N$ is paramount, it follows that $y_{12}y_{13}y_{23}=y_{11}y_{13}^2 \geq 0$, $-|y_{13}| \leq 0$,
$y_{11}-|y_{12}| = 0$, $|y_{12}|-|y_{13}|=y_{11}-|y_{13}|\geq 0$, $y_{33}-|y_{23}|=y_{33}-|y_{13}| \geq 0$, $|y_{23}|-|y_{13}| = 0$, and $y_{22} + |y_{13}| - (|y_{12}| + |y_{23}|) = y_{11} + |y_{13}| - (y_{11} + |y_{13}|) = 0$.  Hence, the conditions of Lemma~\ref{lemma: P-tree condition} must hold. When it comes to other two rows, arranging them properly such that they become the first two rows can prove this.
\end{proof}

\subsection{Main Proof}

\textit{Theorem~1:}
A third-order real symmetric matrix $Y_N$ in the form of \eqref{eq: Y_N} can be realized as the admittance of a three-port resistive network with at most three  elements if and only if it satisfies the conditions of Lemma~\ref{lemma: L-tree condition} or Lemma~\ref{lemma: P-tree condition}, that is, if and only if one of the following two conditions holds:
\begin{enumerate}
  \item[1.] $y_{12}y_{13}y_{23}\leq 0$, $y_{11}-|y_{12}|-|y_{13}|\geq 0$, $y_{22}-|y_{12}|-|y_{23}| \geq 0$, $y_{33}-|y_{13}|-|y_{23}|\geq 0$, and at least three of $y_{12}$, $y_{13}$, $y_{23}$, $y_{11}-|y_{12}| - |y_{13}|$, $y_{22}-|y_{12}|-|y_{23}|$, and $y_{33}-|y_{13}|-|y_{23}|$ are zero.
  \item[2.] $y_{12}y_{13}y_{23} \geq 0$ and at least one of the following three conditions holds with at least three of the six inequality signs being equality:
      a)~$-|y_{13}| \leq 0$, $|y_{13}| \leq |y_{12}| \leq y_{11}$,
      $|y_{13}| \leq |y_{23}| \leq y_{33}$, and
      $|y_{12}| + |y_{23}| - |y_{13}| \leq y_{22}$;
      b)~$-|y_{12}| \leq 0$, $|y_{12}| \leq |y_{13}| \leq y_{11}$,
      $|y_{12}| \leq |y_{23}| \leq y_{22}$, and
      $|y_{13}| + |y_{23}| - |y_{12}| \leq y_{33}$;
      c)~$-|y_{23}| \leq 0$, $|y_{23}|\leq |y_{12}| \leq y_{22}$,  $|y_{23}| \leq |y_{13}| \leq y_{33}$, and
      $|y_{12}| + |y_{13}| - |y_{23}| \leq y_{11}$.
\end{enumerate}

\begin{proof}
\textit{Sufficiency.}
Lemmas~\ref{lemma: L-tree condition} and \ref{lemma: P-tree condition} together imply the sufficiency.

\textit{Necessity.}
Since the augmented graph $\mathcal{G}$ contains at most six edges and the port graph $\mathcal{G}_p$ must be made part of a tree,
the number of vertices ranges from four to seven. If $\mathcal{G}$ contains four vertices, then $\mathcal{G}_p$ is either an L-tree or a P-tree, which implies the conditions of Lemma~\ref{lemma: L-tree condition} or Lemma~\ref{lemma: P-tree condition}.
If $\mathcal{G}$ contains seven vertices, then all the edges of $\mathcal{G}$ constitute a tree.
Hence, currents of the ports must be zero, which implies that the conditions of Lemma~\ref{lemma: L-tree condition} hold  due to Lemma~\ref{lemma: diagnoal entries being zero}. Hence, it remains to consider networks whose augmented graphs contain five or six vertices, and to show none of their admittances can simultaneously contradict the conditions of Lemmas~\ref{lemma: L-tree condition} and \ref{lemma: P-tree condition}.

Suppose that $\mathcal{G}$ contains five vertices. Then, any tree of $\mathcal{G}$ must contain four edges (totally three possibilities as in Fig.~\ref{fig: port-tree}), and the number of chords is at most two. Suppose that $\mathcal{T}$ is a tree of $\mathcal{G}$ containing $\mathcal{G}_p$.
If neither the conditions of Lemma~\ref{lemma: L-tree condition} nor those of Lemma~\ref{lemma: P-tree condition} hold, then $\mathcal{G}$, $\mathcal{G}_p$, and $\mathcal{T}$ must satisfy the following four properties:
\begin{enumerate}
\item[1.] If $\mathcal{G}$ is \textit{separable} \cite[pg.~35]{SR61}, then each \textit{component} \cite[pg.~38]{SR61} contains at least one edge belonging to $\mathcal{G}_p$;
\item[2.] $\mathcal{G}$ is either nonseparable or only contains two \textit{cyclically connected} \cite[pg.~37]{SR61} components;
\item[3.] all end vertices (such as $a$, $d$, $e$ of Fig.~\ref{fig: port-tree}(b)) of $\mathcal{T}$ must be incident with at least one chord;
\item[4.] $\mathcal{G}$ has at most one pair of series edges both belonging to $\mathcal{T}$ and $\mathcal{G}_p$.
\end{enumerate}
The reason is as follows. If Property~1 does not hold, then it is obvious that the network can be equivalent to another one containing fewer nodes and elements because of zero currents, whose admittance must satisfy the conditions of Lemma~\ref{lemma: L-tree condition} or Lemma~\ref{lemma: P-tree condition}. Property~1 further implies that the number of components cannot be more than three. If there are three components, then the admittance of the network must be diagonal, which must satisfy the conditions of Lemma~\ref{lemma: L-tree condition} by Lemma~\ref{lemma: diagonal paramount}. If Property~1 does not hold and there exists a component that is not cyclically connected, then $\mathcal{G}$ contains at least one end branch belonging to $\mathcal{G}_p$, which implies the conditions of Lemma~\ref{lemma: L-tree condition} due to Lemma~\ref{lemma: diagnoal entries being zero}. This proves Property~2. Similarly, Property~3 can be verified. By Lemma~\ref{lemma: two rows equal or opposite}, the admittance of the required networks with two ports in series must satisfy the conditions of Lemma~\ref{lemma: P-tree condition}, which implies Property~4.

\begin{figure}[thpb]
      \centering
      \subfigure[]{
      \includegraphics[scale=0.55]{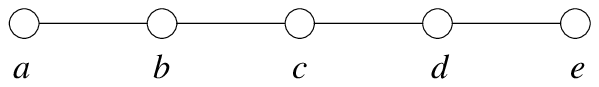}
      \label{subfig: port-tree-01}}
      \hspace{-0.2cm}
      \subfigure[]{
      \includegraphics[scale=0.55]{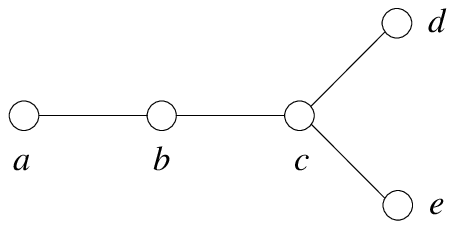}
      \label{subfig: port-tree-02}}
      \hspace{-0.2cm}
      \subfigure[]{
      \includegraphics[scale=0.55]{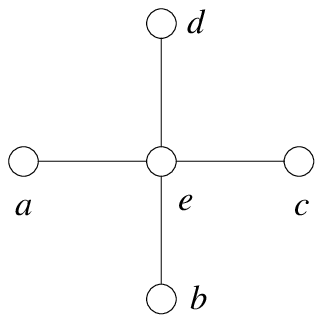}
      \label{subfig: port-tree-03}}
      \caption{The trees $\mathcal{T}$ containing port graph $\mathcal{G}_p$ when the number of vertices is five.}
      \label{fig: port-tree}
   \end{figure}

Then, the above four properties are utilized to eliminate cases when the conditions of Lemma~\ref{lemma: L-tree condition} or Lemma~\ref{lemma: P-tree condition} hold, which simplifies the discussion. By Properties~3--4, one can verify that the number of chords should not be less than two. It is not difficult to find  the augmented graphs $\mathcal{G}$ that satisfy Properties~1--4. It is known that if the corresponding networks can be equivalent to the one with four nodes and at most three elements, then the conditions of Lemma~\ref{lemma: L-tree condition} or Lemma~\ref{lemma: P-tree condition} must hold. As a consequence, by making use of the equivalence of two series (or parallel) elements or the generalized star-mesh transformation, one can show that none of the networks corresponding to these graphs can simultaneously contradict the conditions of Lemma~\ref{lemma: L-tree condition} and Lemma~\ref{lemma: P-tree condition}.

Suppose that $\mathcal{G}$ contains six vertices. Then, any tree of $\mathcal{G}$ must contain five edges, and the number of chords is at most one. For such a network that cannot be equivalent to the one whose augmented graph contains fewer vertices, $\mathcal{G}$, $\mathcal{G}_p$, and $\mathcal{T}$ must also satisfy Properties~1--4 as discussed above. Then, Property~3 implies that the only possible port graph must be the P-tree since there is at most one chord. Together with Property~4, one implies that no such a network exists.

The necessity  is thus proved.
\end{proof}

\section{Proof of Theorem~5}

\subsection{Previous Lemmas}

\begin{lemma}  \label{lemma: coefficients 01}
{A matrix $G$ as defined in
\begin{equation}  \label{eq: G}
G:= \left[
      \begin{array}{ccc}
        G_1 & G_4 & G_5 \\
        G_4 & G_2 & G_6 \\
        G_5 & G_6 & G_3 \\
      \end{array}
    \right]
\end{equation}
is non-negative definite if and only if
$\alpha_0$, $\alpha_1$, $\alpha_2$, $\alpha_3$, $\beta_1$, $\beta_2$, $\beta_3$ $\geq 0$ as defined in
\begin{equation}  \label{eq: matrix condition}
\begin{split}
&\alpha_3 = G_1, \ \alpha_2 = G_1G_2 - G_4^2, \ \alpha_1 = G_1G_3 - G_5^2,    \\
\alpha_0 &= \det(G), \ \beta_3 = G_2, \ \beta_2 = G_3, \ \beta_1 = G_2G_3 - G_6^2.
\end{split}
\end{equation} }
\end{lemma}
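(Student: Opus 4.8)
The plan is to recognize that the seven quantities in \eqref{eq: matrix condition} are precisely the seven principal minors of the symmetric matrix $G$, and then to invoke the standard characterization of non-negative definiteness through principal minors. Indeed, $\alpha_3 = G_1$, $\beta_3 = G_2$, and $\beta_2 = G_3$ are the three first-order principal minors (the diagonal entries); $\alpha_2 = G_1G_2 - G_4^2$, $\alpha_1 = G_1G_3 - G_5^2$, and $\beta_1 = G_2G_3 - G_6^2$ are the three second-order principal minors obtained from the index sets $\{1,2\}$, $\{1,3\}$, and $\{2,3\}$; and $\alpha_0 = \det(G)$ is the single third-order principal minor. Thus the hypothesis ``$\alpha_0,\alpha_1,\alpha_2,\alpha_3,\beta_1,\beta_2,\beta_3\ge 0$'' is exactly the assertion that every principal minor of $G$ is non-negative.

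\textit{Necessity.} Here I would argue that a principal minor is the determinant of a principal submatrix, and that every principal submatrix of a non-negative definite matrix is itself non-negative definite, because it represents the restriction of the quadratic form $x^{T} G x$ to the coordinate subspace spanned by the corresponding unit vectors. A non-negative definite matrix has only non-negative eigenvalues, so its determinant, being the product of those eigenvalues, is non-negative. Applying this to each of the seven principal submatrices shows that all seven quantities are non-negative.

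\textit{Sufficiency.} Here I would pass to the characteristic polynomial $p(\lambda) = \det(\lambda I - G) = \lambda^3 - c_1\lambda^2 + c_2\lambda - c_3$, whose coefficients are the elementary symmetric functions of the eigenvalues and therefore equal the sums of the principal minors of each order: $c_1 = \alpha_3 + \beta_3 + \beta_2$, $c_2 = \alpha_2 + \alpha_1 + \beta_1$, and $c_3 = \alpha_0$. Under the hypothesis these three sums are non-negative. Since $G$ is real and symmetric, all its eigenvalues are real, so it suffices to rule out negative roots of $p$. For any $\lambda < 0$ one has $\lambda^3 < 0$, $-c_1\lambda^2 \le 0$, $c_2\lambda \le 0$, and $-c_3 \le 0$, whence $p(\lambda) < 0$; thus $p$ has no negative root and every eigenvalue of $G$ is non-negative, i.e. $G$ is non-negative definite.

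The step requiring the most care is the sufficiency direction, and in particular resisting the temptation to use only the leading principal minors $\alpha_3,\alpha_2,\alpha_0$ via Sylvester's criterion. That criterion characterizes positive definiteness and demands strict inequalities, so it cannot be used here: non-negative definiteness genuinely requires all principal minors rather than merely the leading ones. For instance, $\mathrm{diag}(1,0,-1)$ has all leading principal minors equal to $1,0,0\ge 0$ yet is indefinite, the failure being detected only by the non-leading first-order minor $-1<0$. The characteristic-polynomial argument circumvents this difficulty by relying only on the non-negativity of the coefficient sums $c_1,c_2,c_3$, which the full collection of principal-minor hypotheses supplies.
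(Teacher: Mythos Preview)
Your proof is correct and follows the same approach as the paper: both identify the seven quantities in \eqref{eq: matrix condition} as exactly the seven principal minors of $G$ and invoke the characterization of non-negative definiteness through non-negativity of all principal minors. The paper simply asserts this characterization as known, whereas you go further and supply a self-contained argument for both directions, including the clean characteristic-polynomial sign argument for sufficiency; this extra detail is sound and adds nothing the paper would object to.
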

\begin{proof}
The matrix $G$ is non-negative if and only if
\begin{equation*}
\begin{split}
G_1=\alpha_3 \geq 0, \ &G_2=\beta_3 \geq 0, \  G_3=\beta_2 \geq 0, \\
\left|\begin{array}{cc} G_1 & G_4 \\
G_4 & G_2 \end{array}\right|=&\alpha_2\geq 0,
\left|\begin{array}{cc} G_2 & G_6 \\
G_6 & G_3 \end{array}\right| =\beta_1 \geq 0,  \\
\left|\begin{array}{cc} G_1 & G_5 \\
G_5 & G_3 \end{array}\right|=\alpha_1 &\geq 0, \ \left|\begin{array}{ccc} G_1 & G_4 & G_5 \\
G_4 & G_2 & G_6 \\
G_5 & G_6 & G_3\end{array}\right|=\alpha_0 \geq 0.
\end{split}
\end{equation*}
Hence, this lemma is proved.
\end{proof}

\begin{lemma}  \label{lemma: one funtion be expressed by the other}
{Consider any function $Y(s)$ in the form of
\begin{equation}   \label{eq: Y initial}
Y(s)  = \frac{\alpha_3 s^3 + \alpha_2 s^2 + \alpha_1 s + \alpha_0}{ s^4 + \beta_3 s^3 + \beta_2 s^2 + \beta_1 s },
\end{equation}
where $\alpha_0$, $\alpha_1$, $\alpha_2$, $\alpha_3$, $\beta_1$, $\beta_2$, $\beta_3$ $\geq 0$.
$Y(s)$ can also be expressed as
\begin{equation}  \label{eq: Y general}
Y(s) = \frac{G_1 s^3 + (G_1 G_2 - G_4^2)s^2 + (G_1G_3 - G_5^2)s + \det(G)}{s\left( s^3 + G_2s^2 + G_3s + (G_2G_3 - G_6^2) \right)}
\end{equation}
with non-negative definite $G$ defined in \eqref{eq: G} and the entries of $G$ satisfying \eqref{eq: matrix condition}
if and only if $W_1$, $W_2$, $W_3$ $\geq 0$ and $W^2 = 4W_1W_2W_3$.
}
\end{lemma}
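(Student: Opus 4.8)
The plan is to pin down the matrix $G$ by equating the two representations \eqref{eq: Y initial} and \eqref{eq: Y general} of $Y(s)$ coefficient-by-coefficient. Since both numerator and denominator in \eqref{eq: Y general} are already written through the entries of $G$ via \eqref{eq: matrix condition}, matching denominators immediately forces $G_1=\alpha_3$, $G_2=\beta_3$, $G_3=\beta_2$, while the remaining defining relations in \eqref{eq: matrix condition} reduce to
\begin{equation*}
G_4^2 = \alpha_3\beta_3-\alpha_2 = W_1,\qquad G_5^2 = \alpha_3\beta_2-\alpha_1 = W_2,\qquad G_6^2 = \beta_2\beta_3-\beta_1 = W_3 .
\end{equation*}
The only relation left to impose is $\det(G)=\alpha_0$. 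Expanding the determinant of \eqref{eq: G} and substituting the three identities above, all the mixed terms cancel and the constraint collapses to $2G_4G_5G_6 = W$, where $W := \alpha_0 + 2\alpha_3\beta_2\beta_3 - \alpha_3\beta_1 - \alpha_1\beta_3 - \alpha_2\beta_2$. Everything then hinges on when this reduced system is solvable for real $G_4,G_5,G_6$.

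For necessity, I would argue that if a non-negative definite $G$ obeying \eqref{eq: matrix condition} exists, then $W_1,W_2,W_3$ are the squares of the real numbers $G_4,G_5,G_6$, so $W_1,W_2,W_3\ge 0$; and squaring the relation $2G_4G_5G_6=W$ gives $W^2 = 4G_4^2G_5^2G_6^2 = 4W_1W_2W_3$. This is the easy direction.

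For sufficiency, assume $W_1,W_2,W_3\ge 0$ and $W^2=4W_1W_2W_3$, and construct $G$ by taking $G_1=\alpha_3$, $G_2=\beta_3$, $G_3=\beta_2$, $G_4=\sqrt{W_1}$, $G_5=\sqrt{W_2}$, with $G_6$ chosen as follows: if $W_1W_2>0$ set $G_6=W/(2\sqrt{W_1W_2})$, so that $G_6^2 = W^2/(4W_1W_2) = W_3$ and $2G_4G_5G_6=W$; if $W_1W_2=0$ then $W^2=0$ forces $W=0$, so set $G_6=\sqrt{W_3}$ and note $2G_4G_5G_6=0=W$ holds automatically. This $G$ satisfies \eqref{eq: matrix condition}, whence $Y(s)$ coincides with \eqref{eq: Y general}, and the same determinant expansion confirms $\det(G)=\alpha_0$. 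Non-negative definiteness of $G$ then follows directly from Lemma~\ref{lemma: coefficients 01}, because its seven test quantities are precisely the coefficients $\alpha_3,\alpha_2,\alpha_1,\alpha_0,\beta_3,\beta_2,\beta_1$ of \eqref{eq: Y initial}, all non-negative by the standing hypothesis.

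I expect the genuinely delicate point to be the sufficiency construction: producing real $G_4,G_5,G_6$ with prescribed squares $W_1,W_2,W_3$ and prescribed product $2G_4G_5G_6=W$. The equality $W^2=4W_1W_2W_3$ is exactly the compatibility condition making this solvable, and the degenerate case $W_1W_2=0$ (equivalently $W=0$) must be separated out to avoid division by zero. Once that case split is handled, the remainder is routine coefficient bookkeeping together with the appeal to Lemma~\ref{lemma: coefficients 01}.
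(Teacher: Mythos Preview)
Your argument is correct and follows essentially the same route as the paper: identify $G_1,G_2,G_3$ from the leading coefficients, read off $G_4^2,G_5^2,G_6^2$ as $W_1,W_2,W_3$, reduce $\det(G)=\alpha_0$ to $2G_4G_5G_6=W$, and invoke Lemma~\ref{lemma: coefficients 01} for non-negative definiteness. The only cosmetic difference is that the paper handles the sufficiency construction by leaving the signs of $G_4,G_5,G_6$ free and then choosing them so that $G_4G_5G_6$ has the sign of $W$, whereas you fix $G_4,G_5\ge 0$ and solve explicitly for $G_6$ with a case split at $W_1W_2=0$; both devices accomplish the same thing. (One tiny slip: $G_1=\alpha_3$ comes from matching numerators, not denominators.)
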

\begin{proof}
\textit{Sufficiency.} It suffices to show that one can always find a non-negative $G$ as defined in \eqref{eq: G} such that admittance \eqref{eq: Y general} concerned with the entries of $G$ equals the given admittance with non-negative coefficients.
Let $G_1 = \alpha_3$, $G_2 = \beta_3$, and $G_3 = \beta_2$. Furthermore, $W_1$, $W_2$, $W_3$ $\geq 0$ guarantees that one can always find $G_4^2$, $G_5^2$, and $G_6^2$ such that
\begin{align*}
G_4^2 &= W_1 = \alpha_3\beta_3 - \alpha_2 \ \Rightarrow \ \alpha_2 = G_1G_2 - G_4^2,  \\
G_5^2 &= W_2 = \alpha_3\beta_2 - \alpha_1 \ \Rightarrow \
\alpha_1 = G_1G_3 - G_5^2,  \\
G_6^2 &= W_3 = \beta_2\beta_3 - \beta_1 \ \Rightarrow \
\beta_1 = G_2G_3 - G_6^2.
\end{align*}
It is seen that the signs of $G_4$, $G_5$, and $G_6$ have not yet been fixed, and
$
G_4G_5G_6 = \pm \sqrt{W_1W_2W_3}.
$
Since $W^2 = 4W_1W_2W_3$ holds, one has $W = \pm 2 \sqrt{W_1W_2W_3}$. If $W = 2 \sqrt{W_1W_2W_3}$, then choose the signs of $G_4$, $G_5$, and $G_6$ to make $G_4G_5G_6 = \sqrt{W_1W_2W_3}$; otherwise let $G_4G_5G_6 = - \sqrt{W_1W_2W_3}$. Hence, one obtains $2G_4G_5G_6 = W$, which implies
\begin{equation*}
\begin{split}
\alpha_0 =& 2 G_4G_5G_6 - 2\alpha_3\beta_2\beta_3 + \alpha_3\beta_1 + \alpha_2\beta_2 + \alpha_1\beta_3   \\
=& 2 G_4G_5G_6 + \alpha_3\beta_2\beta_3 - \alpha_3(\beta_2\beta_3 - \beta_1) \\
&- \beta_3(\alpha_3\beta_2 - \alpha_1) - \beta_2(\alpha_3\beta_3 - \alpha_2)   \\
=& 2G_4G_5G_6 + G_1G_2G_3 - G_1G_6^2 - G_2G_5^2 - G_3G_4^2 = \det(G).
\end{split}
\end{equation*}
Now, one can see that \eqref{eq: matrix condition} holds. Therefore, admittance \eqref{eq: Y general} concerned with $G_1$, $G_2$, $G_3$, $G_4$, $G_5$, and $G_6$ is equal to the given admittance with $\alpha_0$, $\alpha_1$, $\alpha_2$, $\alpha_3$, $\beta_1$, $\beta_2$, $\beta_3$ $\geq 0$. Besides, $G$ as in \eqref{eq: G}  is non-negative definite based on Lemma~\ref{lemma: coefficients 01}.

\textit{Necessity.} Since one can always find a non-negative definite matrix $G$ as defined in \eqref{eq: G} to make \eqref{eq: matrix condition} hold, it is calculated that $W_1 = \alpha_3\beta_3 - \alpha_2 = G_4^2 \geq 0$, $W_2 = \alpha_3\beta_2 - \alpha_1 = G_5^2 \geq 0$, $W_3 = \beta_2 \beta_3 - \beta_1 = G_6^2 \geq 0$, and $W^2 = 4W_1W_2W_3 = 4G_4^2 G_5^2 G_6^2$.
\end{proof}

\begin{lemma}   \label{lemma: condition 01}
{Consider a non-negative definite matrix $G$ in the form of \eqref{eq: G} and the variables $\alpha_0$, $\alpha_1$, $\alpha_2$, $\alpha_3$, $\beta_1$, $\beta_2$, $\beta_3$ as defined in \eqref{eq: matrix condition}. There exists at least one of  the first-order minors or second-order minors of $G$ being zero if and only if at least one of $\alpha_1$, $\alpha_2$, $\alpha_3$, $\beta_1$, $\beta_2$, $\beta_3$, $W_1$, $W_2$, $W_3$, $\left(\beta_2-W/(2W_1)\right)$, $\left(\beta_3-W/(2W_2)\right)$, and
$\left(\alpha_3-W/(2W_3)\right)$ is zero.}
\end{lemma}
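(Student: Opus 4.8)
The plan is to make the statement purely combinatorial: first write down every first- and second-order minor of $G$ explicitly in the entries $G_1,\dots,G_6$, and then match each against the twelve quantities on the right-hand side. The distinct first-order minors are just the entries $G_1,G_2,G_3,G_4,G_5,G_6$. Because $G$ is symmetric, the $2\times2$ minor taken from rows $I$ and columns $J$ equals the one from rows $J$ and columns $I$, so the nine $2\times2$ minors collapse to six distinct values: the three principal ones $\alpha_2=G_1G_2-G_4^2$, $\alpha_1=G_1G_3-G_5^2$, $\beta_1=G_2G_3-G_6^2$, and the three non-principal ones $M_1:=G_1G_6-G_4G_5$, $M_2:=G_4G_6-G_2G_5$, $M_3:=G_3G_4-G_5G_6$. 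Recalling from \eqref{eq: matrix condition} and Lemma~\ref{lemma: one funtion be expressed by the other} that $\alpha_3=G_1$, $\beta_3=G_2$, $\beta_2=G_3$, $W_1=G_4^2$, $W_2=G_5^2$, $W_3=G_6^2$, and $W=2G_4G_5G_6$, I observe that the diagonal entries ($G_1,G_2,G_3$), the vanishing of each off-diagonal entry ($G_4=0\iff W_1=0$, and likewise for $G_5,G_6$), and the three principal $2\times2$ minors already correspond to nine of the listed quantities.

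The crux is to connect the remaining three mixed quantities $\beta_2-W/(2W_1)$, $\beta_3-W/(2W_2)$, $\alpha_3-W/(2W_3)$ to the three non-principal minors. A direct substitution yields the identities
\begin{equation*}
G_4\Bigl(\beta_2-\tfrac{W}{2W_1}\Bigr)=M_3,\quad
G_5\Bigl(\beta_3-\tfrac{W}{2W_2}\Bigr)=-M_2,\quad
G_6\Bigl(\alpha_3-\tfrac{W}{2W_3}\Bigr)=M_1,
\end{equation*}
each valid whenever the relevant denominator $W_1=G_4^2$, $W_2=G_5^2$, or $W_3=G_6^2$ is nonzero. Thus each non-principal minor equals a nonzero multiple of its paired mixed quantity exactly in the regime where that quantity is well defined, completing a clean twelve-to-twelve correspondence between the minors and the listed quantities.

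With these identities the equivalence reduces to bookkeeping in two directions. For the ``if'' direction: a vanishing $\alpha_3,\beta_2,\beta_3$ gives a zero diagonal entry; a vanishing $W_1,W_2,W_3$ gives a zero off-diagonal entry; a vanishing $\alpha_1,\alpha_2,\beta_1$ is itself a zero principal minor; and a vanishing mixed quantity, say $\beta_2-W/(2W_1)=0$, forces $W_1\neq0$ (otherwise $W_1$ is itself a vanishing listed quantity) and hence $M_3=G_4(\beta_2-W/(2W_1))=0$. Conversely, if some minor vanishes: a zero entry maps to one of $\alpha_3,\beta_2,\beta_3$ or one of $W_1,W_2,W_3$; a zero principal minor is directly $\alpha_1,\alpha_2$ or $\beta_1$; and a zero non-principal minor, say $M_3=0$, splits into the subcase $G_4=0$ (so $W_1=0$) and the subcase $G_4\neq0$ (so $\beta_2-W/(2W_1)=M_3/G_4=0$). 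The step I expect to be the main obstacle is precisely this handling of the degenerate cases $W_i=0$, where the mixed expressions are undefined: the argument goes through only because those degeneracies are independently captured by the $W_i$ appearing as separate entries in the list, so the mixed quantities are ever invoked solely when their denominators are nonzero.
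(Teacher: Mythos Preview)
Your proof is correct and follows essentially the same route as the paper: both enumerate the twelve distinct first- and second-order minors of the symmetric matrix $G$, identify nine of them directly with $\alpha_1,\alpha_2,\alpha_3,\beta_1,\beta_2,\beta_3,W_1,W_2,W_3$ via \eqref{eq: matrix condition}, and match the three non-principal $2\times2$ minors with the mixed quantities through the identities $G_1-G_4G_5/G_6=\alpha_3-W/(2W_3)$ etc.\ (your multiplicative form $G_6(\alpha_3-W/(2W_3))=M_1$ is just a rescaling of the same relation). Your case analysis of the degeneracies $G_4,G_5,G_6=0$ is in fact slightly more explicit than the paper's, which simply assumes $G_4G_5G_6\neq0$ when treating the last three equivalences.
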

\begin{proof}
Since \eqref{eq: matrix condition} holds, one obtains $G_1 = \alpha_3$, $G_2 = \beta_3$, $G_3 = \beta_2$, $G_4^2 = W_1$, $G_5^2 = W_2$, $G_6^2 = W_3$, and $2G_4G_5G_6 = W$. It then follows that
\begin{align*}
G_1 - \frac{G_4G_5}{G_6} &= G_1 - \frac{2G_4G_5G_6}{2G_6^2} = \alpha_3 - \frac{W}{2W_3},   \\
G_2 - \frac{G_4G_6}{G_5} &= G_2 - \frac{2G_4G_5G_6}{2G_5^2} =
\beta_3 - \frac{W}{2W_2},    \\
G_3 - \frac{G_5G_6}{G_4} &= G_3 - \frac{2G_4G_5G_6}{2G_4^2} =
\beta_2 - \frac{W}{2W_1}.
\end{align*}
It can be seen that there exists at least one of the first-order minors or second-order minors being zero if and only if at least one of the following twelve equations holds:
\begin{equation*}
\begin{split}
G&_1 = 0, \ G_2 = 0, \ G_3 = 0, \ G_4 = 0, \ G_5 = 0, \ G_6 = 0, \\
&G_1G_2 - G_4^2 = 0, \ G_1G_3 - G_5^2 = 0, \ G_2G_3 - G_6^2 = 0, \\
G_1G_6& - G_4G_5 = 0, \ G_4G_6 - G_2G_5 = 0, \ G_3G_4 - G_5G_6 = 0.
\end{split}
\end{equation*}
For this equivalent condition, one has the following relations:
\begin{equation*}
\begin{split}
G_1 = 0 \Leftrightarrow &\alpha_3 = 0, \ G_2 = 0 \Leftrightarrow \beta_3 = 0, \ G_3 = 0 \Leftrightarrow \beta_2 = 0,  \\
G_4 = 0 \Leftrightarrow W_1 &= 0, \ G_5 = 0 \Leftrightarrow W_2 = 0, \ G_6 = 0 \Leftrightarrow W_3 = 0,  \\
G_1G_2 - G_4^2 &= 0 \Leftrightarrow \alpha_2 = 0, \ G_1G_3 - G_5^2 = 0 \Leftrightarrow \alpha_1 = 0,  \\
&G_2G_3 - G_6^2 = 0 \Leftrightarrow \beta_1 = 0.
\end{split}
\end{equation*}
When $G_4G_5G_6 \neq 0$, the following relations are also satisfied:
\begin{align*}
G_1G_6 - G_4G_5 = 0 \Leftrightarrow G_1 = \frac{2G_4G_5G_6}{2G_6^2} \Leftrightarrow \alpha_3 = \frac{W}{2W_3},  \\
G_4G_6 - G_2G_5 = 0 \Leftrightarrow G_2 = \frac{2G_4G_5G_6}{2G_5^2} \Leftrightarrow \beta_3 = \frac{W}{2W_2},  \\
G_3G_4 - G_5G_6 = 0 \Leftrightarrow G_3 = \frac{2G_4G_5G_6}{2G_4^2} \Leftrightarrow \beta_2 = \frac{W}{2W_1}.
\end{align*}
Combining together the above discussions, this lemma is proved.
\end{proof}

\begin{lemma}  \label{lemma: condition 02}
{Consider a non-negative definite matrix $G$ as defined in \eqref{eq: G}, whose  all the first-order minors and second-order minors
are all non-zero, with variables $\alpha_1$, $\alpha_2$, $\alpha_3$, $\beta_1$, $\beta_2$, $\beta_3$,
and $\beta_4$ as defined in \eqref{eq: matrix condition}. Then, $G$ satisfies the condition of
Lemma~3 if and only if one of the following
holds:
1)~$W < 0$ and $\alpha_0 = 0$; 2)~$W > 0$ and $\alpha_0 + \alpha_3\beta_1 + \alpha_2\beta_2 - \alpha_1\beta_3 = 0$; 3)~$W > 0$ and $\alpha_0 + \alpha_3\beta_1 + \alpha_1\beta_3 - \alpha_2\beta_2 = 0$;
4)~$W > 0$ and $\alpha_0 + \alpha_1\beta_3 + \alpha_2\beta_2 - \alpha_3\beta_1 = 0$.
}
\end{lemma}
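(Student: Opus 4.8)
The plan is to push everything through the explicit dictionary between the entries of $G$ and the variables $\alpha_i,\beta_j,W_k$ that was set up in the proof of Lemma~\ref{lemma: condition 01}, and then to split the argument according to the sign of $W$. Because $G$ has all of its first- and second-order minors nonzero, Lemma~\ref{lemma: condition 01} guarantees that none of $\alpha_1,\alpha_2,\alpha_3,\beta_1,\beta_2,\beta_3,W_1,W_2,W_3$ and none of $\beta_2-W/(2W_1)$, $\beta_3-W/(2W_2)$, $\alpha_3-W/(2W_3)$ vanishes. In particular $G_4,G_5,G_6\neq0$, so $W=2G_4G_5G_6\neq0$ and exactly one of $W<0$, $W>0$ holds. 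The sign of $W$ equals the sign of $G_4G_5G_6$, which is precisely the quantity governing the L-tree/P-tree dichotomy behind Theorem~1 (via Lemmas~\ref{lemma: L-tree condition} and \ref{lemma: P-tree condition}); this is what makes the case split natural and explains why the single branch $W<0$ is matched by one equation while $W>0$ splits into three symmetric alternatives.

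First I would restate the condition of Lemma~3 as an explicit polynomial condition on $G_1,\dots,G_6$ (the element values fixed by that realization), and then substitute $G_1=\alpha_3$, $G_2=\beta_3$, $G_3=\beta_2$, $G_4^2=W_1$, $G_5^2=W_2$, $G_6^2=W_3$, $2G_4G_5G_6=W$, $\det(G)=\alpha_0$, together with the three second-order-minor identities $G_3-G_5G_6/G_4=\beta_2-W/(2W_1)$, $G_2-G_4G_6/G_5=\beta_3-W/(2W_2)$, and $G_1-G_4G_5/G_6=\alpha_3-W/(2W_3)$ taken from the proof of Lemma~\ref{lemma: condition 01}. In the branch $W<0$ the sign pattern $G_4G_5G_6<0$ is the one compatible with the L-tree form of Lemma~\ref{lemma: L-tree condition}, and I expect the condition of Lemma~3 to collapse, after substitution, to the single equation $\alpha_0=\det(G)=0$, which is case~1; the three ports enter symmetrically here (a star/common-node structure), so only the one symmetric invariant $\det(G)$ survives.

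In the branch $W>0$ the product $G_4G_5G_6>0$ is the P-tree sign pattern, and the realization can degenerate in one of three ways according to which port is distinguished, mirroring the three alternatives 1), 2), 3) of Lemma~\ref{lemma: P-tree condition}. The core computation is the identity
\[
\alpha_0+\alpha_3\beta_1+\alpha_2\beta_2-\alpha_1\beta_3 = 2\big(G_1(G_2G_3-G_6^2)-G_4(G_3G_4-G_5G_6)\big),
\]
together with its two analogues obtained by permuting $(G_1,G_2,G_3)$ and $(G_4,G_5,G_6)$. These recast conditions 2), 3), 4) as the vanishing of a combination of entries and second-order minors of $G$, namely the value of the conductance that must disappear in the corresponding degenerate P-tree realization. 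I would then verify that ``at least three of the six inequalities being equalities'' in each sub-case of Lemma~\ref{lemma: P-tree condition} forces exactly one of these three combinations to vanish, and conversely that the vanishing of one combination, under $W>0$ and the nonzero-minor hypotheses, returns the required equalities.

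The main obstacle I anticipate is the sign bookkeeping in the substitution step combined with a faithful reading of Lemma~3: only the sign of the product $G_4G_5G_6$ is fixed, so one must argue — via cross-sign changes, which reorient ports and leave realizability unchanged — that the reduction is independent of the individual signs of $G_4,G_5,G_6$, and that in the branch $W>0$ \emph{precisely one} of the three cyclic combinations (not two, not none) is selected. Establishing the cyclic identities above and matching each to the element value whose vanishing characterizes the degenerate realization is the technical heart of the argument; once those identities are in hand, the equivalences in cases 1)--4) follow by direct substitution of the dictionary from Lemma~\ref{lemma: condition 01}.
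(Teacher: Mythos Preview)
Your computational core is exactly what the paper does: the three identities
\[
G_1G_2G_3 + G_4G_5G_6 - G_1G_6^2 - G_3G_4^2 \;=\; \tfrac12\bigl(\alpha_0 + \alpha_3\beta_1 + \alpha_2\beta_2 - \alpha_1\beta_3\bigr)
\]
and its two cyclic companions, together with $W=2G_4G_5G_6$ and $\alpha_0=\det(G)$, are precisely the substitutions the paper writes down, and after that the paper simply says ``Now, the lemma can be proved.'' So on the level of the actual algebra you are aligned with the paper.

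Where you diverge is in framing. You are treating the proof as if you must \emph{re-derive} the condition of Lemma~3 from the L-tree/P-tree Lemmas~\ref{lemma: L-tree condition}--\ref{lemma: P-tree condition}, worrying about which ``three of six inequalities'' become equalities and whether ``precisely one'' cyclic combination is selected. That is not the task. Lemma~3 lives in the companion paper \cite{WCZC15} and already states its condition as an explicit list in the $G_i$: either $G_4G_5G_6<0$ with $\det(G)=0$, or $G_4G_5G_6>0$ with one of the three expressions $G_1G_2G_3+G_4G_5G_6-G_iG_k^2-G_jG_\ell^2$ equal to zero. The present lemma only asks you to translate those stated conditions through the dictionary \eqref{eq: matrix condition}. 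Once you accept Lemma~3 as given, the sign bookkeeping and the ``exactly one branch'' issue you flag as obstacles are already packaged inside it; there is nothing further to check beyond the four one-line substitutions. Your detour through Lemmas~\ref{lemma: L-tree condition}--\ref{lemma: P-tree condition} is not wrong (the connection is real, and indeed underlies Lemma~3 in \cite{WCZC15}), but it reproves content you are entitled to cite and inflates a three-line argument into a structural one.
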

\begin{proof}
Since \eqref{eq: matrix condition} holds, it can be calculated that $2G_4G_5G_6 = W$ and
\begin{equation*}
\begin{split}
G_1G_2G_3 + G_4G_5G_6 &- G_1G_6^2 - G_3G_4^2   \\
& =\frac{\alpha_0 + \alpha_3\beta_1 + \alpha_2\beta_2 - \alpha_1\beta_3}{2},
\end{split}
\end{equation*}
\begin{equation*}
\begin{split}
G_1G_2G_3 + G_4G_5G_6 &- G_1G_6^2 - G_2G_5^2   \\
& = \frac{\alpha_0 + \alpha_3\beta_1 + \alpha_1\beta_3 - \alpha_2\beta_2}{2},
\end{split}
\end{equation*}
\begin{equation*}
\begin{split}
G_1G_2G_3 + G_4G_5G_6 &- G_2G_5^2 - G_3G_4^2    \\
& = \frac{\alpha_0 + \alpha_1\beta_3 + \alpha_2\beta_2 - \alpha_3\beta_1}{2}.
\end{split}
\end{equation*}
Now, the lemma can be proved.
\end{proof}

\subsection{Main Proof}

\textit{Theorem~5:}
A positive-real function $Y(s)$ can be realized as the
driving-point admittance of a one-port network,  consisting of one damper, one inerter, and at most three springs, and satisfying Assumption~1, if and only if $Y(s)$ can be written in the form of \eqref{eq: Y initial},
where the coefficients satisfy $\alpha_0$, $\alpha_1$, $\alpha_2$, $\alpha_3$, $\beta_1$, $\beta_2$, $\beta_3$ $\geq 0$, $W_1$, $W_2$, $W_3$ $\geq 0$, $W^2 = 4W_1W_2W_3$, and also satisfy
1) at least one of $\alpha_1$, $\alpha_2$, $\alpha_3$, $\beta_1$, $\beta_2$, $\beta_3$, $W_1$, $W_2$, $W_3$, $\left(\beta_2-W/(2W_1)\right)$, $\left(\beta_3-W/(2W_2)\right)$, and
$\left(\alpha_3-W/(2W_3)\right)$ is zero; or 2) one of the following
holds with Condition~1 being not satisfied: a)~$W < 0$ and $\alpha_0 = 0$; b)~$W > 0$ and $\alpha_0 + \alpha_3\beta_1 + \alpha_2\beta_2 - \alpha_1\beta_3 = 0$; c)~$W > 0$ and $\alpha_0 + \alpha_3\beta_1 + \alpha_1\beta_3 - \alpha_2\beta_2 = 0$;
d)~$W > 0$ and $\alpha_0 + \alpha_1\beta_3 + \alpha_2\beta_2 - \alpha_3\beta_1 = 0$.
\begin{proof}
\textit{Necessity.} Due to Theorem~2, it is seen that $Y(s)$ is written in the form of \eqref{eq: Y general} with non-negative definite $G$ as defined in \eqref{eq: G}. Therefore, $Y(s)$ can also be expressed as \eqref{eq: Y initial} with coefficients satisfying \eqref{eq: matrix condition}. Due to Lemma~\ref{lemma: coefficients 01}, one has $\alpha_0$, $\alpha_1$, $\alpha_2$, $\alpha_3$, $\beta_1$, $\beta_2$, $\beta_3$ $\geq 0$. After calculations based on \eqref{eq: matrix condition}, it is obtained that $W_1$, $W_2$, $W_3$ $\geq 0$  and $W^2 = 4W_1W_2W_3$. Since $G$ as defined in \eqref{eq: G} satisfies the conditions of Lemma~2 or Lemma~3, it then follows that $\alpha_0$, $\alpha_1$, $\alpha_2$, $\alpha_3$, $\beta_1$, $\beta_2$, $\beta_3$ must satisfy the conditions of
Lemma~\ref{lemma: condition 01} or
Lemma~\ref{lemma: condition 02} (Condition~1 or Condition~2 of this theorem). Hence, the necessity part is proved.

\textit{Sufficiency.} From Lemma~\ref{lemma: one funtion be expressed by the other}, $Y(s)$ can also be expressed as \eqref{eq: Y general} with non-negative definite $G$ defined in \eqref{eq: G}. Furthermore, coefficients $\alpha_0$, $\alpha_1$, $\alpha_2$, $\alpha_3$, $\beta_1$, $\beta_2$, $\beta_3$ satisfy \eqref{eq: matrix condition}. If the conditions of Lemma~\ref{lemma: condition 01} (Condition~1 of this theorem) hold, then there must exist at least one of the first-order minors or second-order minors of $G$ being zero due to that Lemma, which is the conditions of Lemma~2. If the conditions of Lemma~\ref{lemma: condition 02} hold and the conditions of Lemma~\ref{lemma: condition 01} do not hold (Condition~2 of this theorem), then $G$ must satisfy the conditions of Lemma~3. Finally, based on Theorem~2, $Y(s)$ is realizable by the network with one damper, one inerter, and an arbitrary number of springs.
For the case when Lemma~2   holds,
based on Theorem~3, $Y(s)$ is realizable by series-parallel
networks containing at most three springs, one damper, and one inerter, through the Foster Preamble.
For the case when Lemma~3   holds, $Y(s)$ is
realizable by one of four networks shown in Fig.~\ref{fig: covering-configuration}. The expressions of values of elements
can be converted from those in Theorem~4 through relations
\eqref{eq: matrix condition}. Since it is calculated that $G_4^2 = W_1$, $G_5^2 = W_2$, $G_6^2 = W_3$,
$2G_4G_5G_6 = W$, $G_1 - G_4G_5/G_6 = \alpha_3 - W/(2W_3)$, $G_2 - G_4G_6/G_5 = \beta_3 - W/(2W_2)$, and
$G_3 - G_5G_6/G_4 = \beta_2 - W/(2W_1)$, the values can be easily obtained as follows.
\begin{figure}[thpb]
     \centering
     \subfigure[]{
      \includegraphics[scale=0.76]{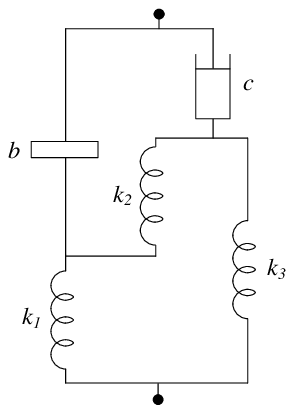}
      \label{fig: first-configuration}}
      \hspace{-0.52cm}
      \subfigure[]{
      \includegraphics[scale=0.76]{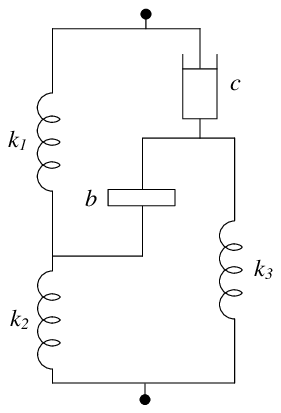}
      \label{fig: second-configuration}}
      \hspace{-0.52cm}
      \subfigure[]{
      \includegraphics[scale=0.76]{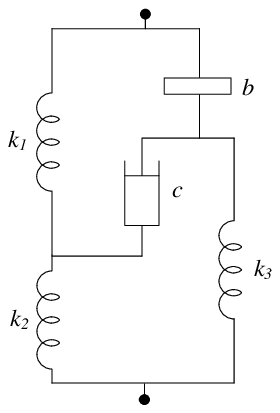}
      \label{fig: third-configuration}}
      \hspace{-0.52cm}
      \subfigure[]{
      \includegraphics[scale=0.76]{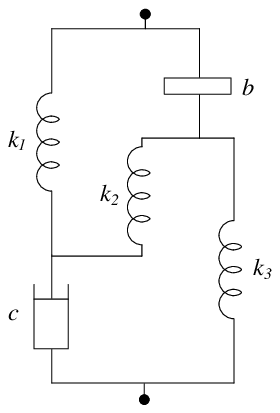}
      \label{fig: fourth-configuration}}
      \caption{The configurations covering all cases that satisfy the conditions  of Lemma~3 (Condition~2 of Theorem~5).}
      \label{fig: covering-configuration}
\end{figure}
\begin{enumerate}
  \item[a)] If $W < 0$ and $\alpha_0 = 0$, then $Y(s)$ can be realized in the form of  Fig.~\ref{fig: covering-configuration}(a), with
      \begin{align}
      k_1 &= \frac{\alpha_2}{\beta_3 - \frac{W}{2W_2}}, \label{eq: first-configuration-k1-02}  \\
      k_2 &= \frac{W_3(\alpha_3 - \frac{W}{2W_3})\alpha_2}{W_2(\beta_3 - \frac{W}{2W_2})^2},  \label{eq: first-configuration-k2-02} \\
      k_3 &= \frac{W(\frac{W}{2W_3} - \alpha_3)}{2W_2(\beta_3 - \frac{W}{2W_2})},  \label{eq: first-configuration-k3-02} \\
      b &= \frac{\alpha_2^2}{W_2(\beta_3 - \frac{W}{2W_2})^2},  \label{eq: first-configuration-b-02} \\
      c &= \frac{W_3 (\alpha_3 - \frac{W}{2W_3})^2}{W_2(\beta_3 - \frac{W}{2W_2})^2}. \label{eq: first-configuration-c-02}
      \end{align}
  \item[b)] If $W > 0$ and $\alpha_0 + \alpha_3\beta_1 + \alpha_2\beta_2 - \alpha_1\beta_3 = 0$, then $Y(s)$ can be realized in the form of
      Fig.~\ref{fig: covering-configuration}(b), with
      \begin{align}
      k_1 &= \frac{\alpha_3 W_3 (\alpha_3 - \frac{W}{2W_3})}{\beta_2 W_1}, \label{eq: third-configuration-k1-02}  \\
      k_2 &= \frac{\alpha_3 W}{2 \beta_2 W_1}, \label{eq: third-configuration-k2-02} \\
      k_3 &= \frac{\alpha_3(\beta_2 - \frac{W}{2W_1})}{\beta_2},  \label{eq: third-configuration-k3-02} \\
      b &= \frac{\alpha_3^2W_3}{\beta_2^2 W_1},  \label{eq: third-configuration-b-02} \\
      c &= \frac{\alpha_3^2}{W_1}.  \label{eq: third-configuration-c-02}
      \end{align}
  \item[c)] If $W > 0$ and $\alpha_0 + \alpha_3\beta_1 + \alpha_1\beta_3 - \alpha_2\beta_2 = 0$, then $Y(s)$ can be realized in the form of
      Fig.~\ref{fig: covering-configuration}(c), with
      \begin{align}
      k_1 &= \frac{\alpha_3 W_3 (\alpha_3 - \frac{W}{2W_3})}{\beta_3 W_2}, \label{eq: second-configuration-k1-02}  \\
      k_2 &= \frac{\alpha_3 W}{2 \beta_3 W_2},  \label{eq: second-configuration-k2-02} \\
      k_3 &= \frac{\alpha_3 (\beta_3 - \frac{W}{2W_2})}{\beta_3}, \label{eq: second-configuration-k3-02}   \\
      b &= \frac{\alpha_3^2}{W_2},  \label{eq: second-configuration-b-02} \\
      c &= \frac{\alpha_3^2W_3}{\beta_3^2 W_2}.  \label{eq: second-configuration-c-02}
      \end{align}
  \item[d)] If $W > 0$ and $\alpha_0 + \alpha_1\beta_3 + \alpha_2\beta_2 - \alpha_3\beta_1 = 0$, then $Y(s)$ can be realized in the form of
      Fig.~\ref{fig: covering-configuration}(d), with
      \begin{align}
      k_1 &= \frac{W_1 (\beta_2 - \frac{W}{2W_1})}{\beta_2 \beta_3},  \label{eq: fourth-configuration-k1-02}  \\
      k_2 &= \frac{W}{2\beta_2\beta_3},  \label{eq: fourth-configuration-k2-02} \\
      k_3 &= \frac{W_2(\beta_3 - \frac{W}{2W_2})}{\beta_2\beta_3}, \label{eq: fourth-configuration-k3-02} \\
      b &= \frac{W_2}{\beta_2^2},  \label{eq: fourth-configuration-b-02}  \\
      c &= \frac{W_1}{\beta_3^2}.  \label{eq: fourth-configuration-c-02}
      \end{align}
\end{enumerate}
\end{proof}

\section{Proof of Theorem~6}
\subsection{A Previous Lemma}
\begin{lemma}  \label{lemma: two-port}
{If the admittance of a two-port network consisting of only springs is well-defined, then it must be realizable as shown in Fig.~\ref{fig: two-port-configuration}, with $k_1$, $k_2$, $k_3$ $\geq 0$ or the one by switching the polarity of any one of the two ports.}
\end{lemma}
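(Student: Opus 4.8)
The plan is to exploit the single-element-kind (all-spring) structure to reduce the claim to a purely resistive question. Since every element is a spring, with admittance $k/s$, the two-port admittance must take the form $Y(s) = Y_N/s$, where $Y_N = [y_{ij}]_{2\times 2}$ is a constant real symmetric matrix. Realizing $Y(s)$ by springs is therefore equivalent to realizing the constant matrix $Y_N$ by a two-port network of non-negative conductances, namely the spring constants. The first step I would take is to record that, because the given network is a reciprocal resistive (all-spring) two-port whose admittance is well-defined, $Y_N$ is the admittance matrix of such a network and is hence paramount; in particular its first- and second-order principal minors are non-negative and, crucially, $y_{11} \geq |y_{12}|$ and $y_{22} \geq |y_{12}|$.

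Next I would normalize the sign of the off-diagonal entry. If $y_{12} \leq 0$ I proceed directly; if $y_{12} > 0$, switching the polarity (reference direction) of one of the two ports negates both off-diagonal entries $y_{12}, y_{21}$ while leaving the diagonal entries $y_{11}, y_{22}$ unchanged, so after this operation $y_{12} \leq 0$. This is precisely the alternative referred to in the statement by ``the one by switching the polarity of any one of the two ports.''

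With $y_{12} \leq 0$ secured, I would exhibit the explicit $\pi$-type realization of Fig.~\ref{fig: two-port-configuration}: a spring $k_3 = |y_{12}|$ connected directly between the two port terminals, together with springs $k_1 = y_{11} - |y_{12}|$ and $k_2 = y_{22} - |y_{12}|$ joining the respective terminals to the common node. A direct computation of the admittance of this configuration returns exactly $Y_N/s$, while paramountcy guarantees $k_1, k_2, k_3 \geq 0$; degenerate cases in which one of these vanishes simply use fewer than three springs, consistent with the ``at most three'' count implicit in the figure.

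The routine part is verifying that the $\pi$-network reproduces the prescribed entries $y_{ij}$, which is immediate. The one genuine step is justifying paramountcy of $Y_N$ --- equivalently the inequalities $y_{11} \geq |y_{12}|$ and $y_{22} \geq |y_{12}|$ --- from the bare hypothesis that the admittance is well-defined; this I would obtain from the fact, available in the preceding framework (cf.\ Lemma~\ref{lemma: well-defined} and the resistive realization theory invoked there), that the short-circuit admittance matrix of any reciprocal resistive two-port is paramount. Some care is also needed with the orientation bookkeeping, so that the sign of $y_{12}$ is matched to the fixed orientation assumed in the figure, but this is handled cleanly by the single polarity switch.
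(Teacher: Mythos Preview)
Your proposal is correct and follows essentially the same route as the paper: write the admittance as $K/s$ with $K$ real symmetric, invoke paramountcy to get $K_{11}\ge|K_{12}|$ and $K_{22}\ge|K_{12}|$, and then read off the three spring constants of a $\pi$-network, using a single polarity reversal to handle the sign of the off-diagonal entry. The only discrepancies are cosmetic and stem from not having the figure: the paper labels the coupling spring $k_2$ (not $k_3$) and orients Fig.~\ref{fig: two-port-configuration} so that the \emph{non-switched} case is $K_{12}\ge 0$ rather than $K_{12}\le 0$; also, the paper cites \cite{SW58} directly for paramountcy rather than Lemma~\ref{lemma: well-defined}.
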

\begin{proof}
For a two-port network consisting of only springs, its admittance $Y_L$ must be in the form of
\begin{equation}  \label{eq: Y_L}
Y_L = \frac{1}{s} \left[
           \begin{array}{cc}
             K_{11} & K_{12} \\
             K_{12} & K_{22} \\
           \end{array}
         \right] := \frac{1}{s} K.
\end{equation}
According to the analogy to one-element-kind networks, $K$ as defined in \eqref{eq: Y_L} is necessarily paramount \cite{SW58}, that is, $K_{11} \geq |K_{12}|$ and  $K_{22} \geq |K_{12}|$.

If $K_{12} \geq 0$, then $Y_L$ as in \eqref{eq: Y_L} must be realizable as shown in Fig.~\ref{fig: two-port-configuration}, where $k_1 = K_{11} - K_{12} \geq 0$, $k_2 = K_{12} \geq 0$, and $k_3 = K_{22} - K_{12} \geq 0$.

If $K_{12} < 0$, then $Y_L$ as in \eqref{eq: Y_L} must be realizable as shown in Fig.~\ref{fig: two-port-configuration} by switching the polarity of any one of the two ports, where $k_1 = K_{11} - |K_{12}| \geq 0$, $k_2 = |K_{12}| > 0$, and $k_3 = K_{22} - |K_{12}| \geq 0$.
\end{proof}

\begin{figure}[thpb]
      \centering
      \includegraphics[scale=0.95]{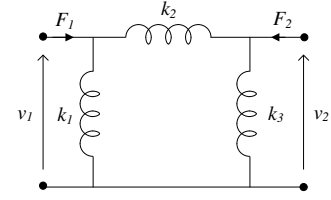}
      \caption{The two-port configuration consisting only of springs.}
      \label{fig: two-port-configuration}
\end{figure}

\subsection{Main Proof}

\textit{Theorem~6:}
A positive-real function $Y(s)$ can be realized as the
driving-point admittance of a one-port network,  consisting of one damper, one inerter, and at most three springs, and not satisfying Assumption~1, if and only if $Y(s)$ can be written in the form of
\begin{equation}  \label{eq: general admittance non-well-defined}
Y(s)=\frac{\alpha_3s^3+\alpha_2s^2+\alpha_1s+\alpha_0}{\beta_3s^3 + \beta_2s^2+\beta_1s},
\end{equation}
where $\alpha_0$, $\alpha_1$, $\alpha_2$, $\alpha_3$, $\beta_1$, $\beta_2$, $\beta_3$ $\geq 0$, and one of the following five conditions holds:
1)~$\alpha_3 = \beta_2 = \beta_3 = 0$, $\alpha_1$, $\alpha_2$, $\beta_1$ $> 0$; 2)~$\alpha_3 = 0$, $\beta_2$,
$\beta_3$ $> 0$, $\alpha_1\beta_1 - \alpha_0\beta_2 \geq 0$,
$\alpha_1^2 + \alpha_0\beta_2^2 \geq \alpha_1\beta_1\beta_2$,
$\alpha_0\beta_2 + \beta_2\beta_1^2 \geq \alpha_1\beta_1$,   $\alpha_1\beta_3 = \alpha_2\beta_2$;
3)~$\beta_2 = 0$, $\alpha_3$, $\beta_3$ $>0$, $\alpha_2\beta_1 - \alpha_0\beta_3 \geq 0$,
$\alpha_2^2 + \alpha_0\beta_3^2 \geq \alpha_2\beta_1\beta_3$,
$\alpha_0\beta_3 + \beta_3\beta_1^2 \geq \alpha_2\beta_1$,   $\alpha_1\beta_3 = \alpha_3\beta_1$;
4)~$\beta_3 = 0$, $\alpha_3$, $\beta_2$ $> 0$, $\alpha_1\beta_1 - \alpha_0\beta_2 \geq 0$,
$\alpha_1^2 + \alpha_0\beta_2^2 \geq \alpha_1\beta_1\beta_2$,
$\alpha_0\beta_2 + \beta_2\beta_1^2  \geq \alpha_1\beta_1$,   $\alpha_3\beta_1 = \alpha_2\beta_2$;
5)~$\alpha_3$, $\beta_2$, $\beta_3$ $> 0$, $\alpha_1\beta_3 + \alpha_2\beta_2 \geq \alpha_3\beta_1$,
$\alpha_2\beta_2 + \alpha_3\beta_1  \geq \alpha_1\beta_3$, $\alpha_1\beta_3 + \alpha_3\beta_1 \geq \alpha_2\beta_2$,
$\alpha_3 = \beta_2\beta_3$,
$\alpha_1^2\beta_3^2 + \alpha_2^2\beta_2^2 + \alpha_3^2\beta_1^2 + 4\alpha_0\alpha_3^2 = 2 (\alpha_1\beta_3\alpha_2\beta_2 + \alpha_2\beta_2\alpha_3\beta_1 + \alpha_3\beta_1\alpha_1\beta_3)$.

Furthermore, networks in Fig.~\ref{fig: configuration-non-well-defined} with $b$, $c$ $> 0$ and $k_1$, $k_2$, $k_3$ $\geq 0$ can  realize each of the five conditions above, respectively.
\begin{figure}[thpb]
      \centering
      \subfigure[]{
      \includegraphics[scale=0.87]{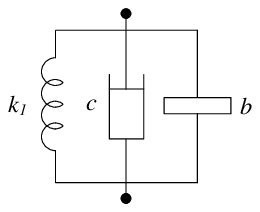}
      \label{subfig: first-configuration-non-well-defined}}
      \subfigure[]{
      \includegraphics[scale=0.87]{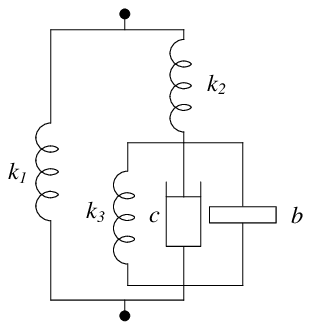}
      \label{subfig: second-configuration-non-well-defined}}
      \subfigure[]{
      \includegraphics[scale=0.87]{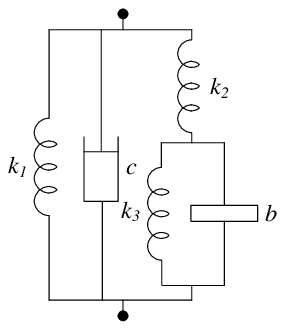}
      \label{subfig: third-configuration-non-well-defined}}
      \subfigure[]{
      \includegraphics[scale=0.87]{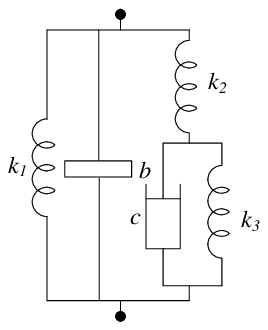}
      \label{subfig: fourth-configuration-non-well-defined}}
      \subfigure[]{
      \includegraphics[scale=0.87]{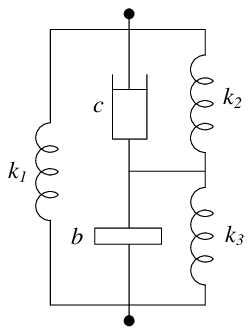}
      \label{subfig: fifth-configuration-non-well-defined}}
      \caption{The networks used to cover the condition of Theorem~6. (a) for Condition~1; (b) for Condition~2; (c) for Condition~3; (d) for Condition~4; (e) for Condition~5.}
      \label{fig: configuration-non-well-defined}
   \end{figure}
\begin{proof}
Based on Lemma~\ref{lemma: well-defined} and \cite[pg.~27]{SR61},  for the augmented graph $\mathcal{G}$, the edges corresponding to the inerter, the damper, and the port must form at least one circuit when the admittance of $X$ is not well-defined. Denote $e_i$ as the edge corresponding to the inerter, $e_d$ to the damper, and $e_p$ to the port. Then, all the possible cases are as follows.
If each pair of the three edges constitute a circuit, then the possible network must be equivalent to  Fig.~\ref{fig: configuration-non-well-defined}(a), where $b$, $c$ $> 0$ and $k_1 \geq 0$, because any other nodes can be eliminated by the generalized star-mesh transformation \cite{Ver70}.
If there is only one circuit constituted by two edges of $e_i$, $e_d$, and $e_p$, then the other part of the network can be regarded as a two-port network which consists of only springs and obtains a well-defined admittance. Therefore, based on Lemma~\ref{lemma: two-port}, one obtains the networks shown in Fig.~\ref{fig: configuration-non-well-defined}(b), \ref{fig: configuration-non-well-defined}(c) and \ref{fig: configuration-non-well-defined}(d), where $b$, $c$ $> 0$ and $k_1$, $k_2$, $k_3$  $\geq 0$.
If the three edges together form one circuit, then an equivalent configuration is shown in Fig.~\ref{fig: configuration-non-well-defined}(e), where $b$, $c$ $> 0$ and $k_1$, $k_2$, $k_3$  $\geq 0$, by using the generalized star-mesh transformation \cite{Ver70}.
Now, it remains to derive the realizability conditions for the networks in Fig.~\ref{fig: configuration-non-well-defined}, each of which corresponds to one of the five conditions of this theorem.

\textit{The realizability condition of Fig.~\ref{fig: configuration-non-well-defined}(a):} It is calculated that the admittance of the network shown in Fig.~\ref{fig: configuration-non-well-defined}(a) is expressed as
\begin{equation} \label{eq: first-configuration-non-well-defined}
Y(s) = \frac{b s^2 + c s + k_1}{s},
\end{equation}
where $b$, $c$ $> 0$ and $k_1 \geq 0$. If $Y(s)$ is realizable as in Fig.~\ref{fig: configuration-non-well-defined}(a), then it can also be expressed as \eqref{eq: general admittance non-well-defined}, with $\alpha_3 = 0$, $\alpha_2 = b$, $\alpha_1 = c$, $\alpha_0 = k_1$, $\beta_3 = 0$, $\beta_2 = 0$, and $\beta_1 = 1$. It is obvious that they are all non-negative and satisfy Condition~1. Conversely, if $\alpha_0$, $\alpha_1$, $\alpha_2$, $\alpha_3$, $\beta_1$, $\beta_2$, $\beta_3$ $\geq 0$ and they satisfy Condition~1, then letting $b = \alpha_2/\beta_1$, $c = \alpha_1/\beta_1$, and $k_1 = \alpha_0/\beta_1$ yields that $b$, $c$ $> 0$, $k_1 \geq 0$, and \eqref{eq: first-configuration-non-well-defined} equals \eqref{eq: general admittance non-well-defined}.

\textit{The realizability condition of Fig.~\ref{fig: configuration-non-well-defined}(b):}
It is calculated that the admittance of the network shown in Fig.~\ref{fig: configuration-non-well-defined}(b) is expressed as
\begin{equation} \label{eq: second-configuration-non-well-defined}
Y(s) = \frac{b(k_1+k_2)s^2 + c(k_1+k_2)s + (k_1k_2+k_2k_3+k_1k_3)}{s\left(  bs^2 + cs + (k_2+k_3) \right)},
\end{equation}
where $b$, $c$ $> 0$ and $k_1$, $k_2$, $k_3$ $\geq 0$. If $Y(s)$ is realizable as in Fig.~\ref{fig: configuration-non-well-defined}(b), then it can also be expressed as \eqref{eq: general admittance non-well-defined}, with $\alpha_3 = 0$, $\alpha_2 = b(k_1+k_2)$, $\alpha_1 = c(k_1+k_2)$, $\alpha_0 = k_1k_2 + k_2k_3 + k_1k_3$, $\beta_3 = b$, $\beta_2 = c$, and $\beta_1 = k_2+k_3$. It is obvious that they are all non-negative and satisfy Condition~2. Conversely, if $\alpha_0$, $\alpha_1$, $\alpha_2$, $\alpha_3$, $\beta_1$, $\beta_2$, $\beta_3$ $\geq 0$ and they satisfy Condition~2, then letting $b = \beta_3$, $c = \beta_2$, $k_1 = \alpha_1/\beta_2 - k_2$, $k_3 = \beta_1 - k_2$, and $k_2 = \sqrt{(\alpha_1\beta_1 - \alpha_0\beta_2)/\beta_2}$ yields that $b$, $c$ $> 0$, $k_1$, $k_2$, $k_3$ $\geq 0$, and \eqref{eq: second-configuration-non-well-defined} equals \eqref{eq: general admittance non-well-defined}.

\textit{The realizability condition of Fig.~\ref{fig: configuration-non-well-defined}(c):}
It is calculated that the admittance of the network shown in Fig.~\ref{fig: configuration-non-well-defined}(c) is expressed as
\begin{equation}  \label{eq: third-configuration-non-well-defined}
\begin{split}
&Y(s) =  \\
&\frac{bcs^3 + b(k_1+k_2)s^2 + c(k_2+k_3)s + (k_1k_2+k_2k_3+k_1k_3)}{s\left( bs^2 + (k_2+k_3) \right)},
\end{split}
\end{equation}
where $b$, $c$ $> 0$ and $k_1$, $k_2$, $k_3$ $\geq 0$.  If $Y(s)$ is realizable as in Fig.~\ref{fig: configuration-non-well-defined}(c), then it can also be expressed as \eqref{eq: general admittance non-well-defined}, with $\alpha_3 = bc$, $\alpha_2 = b(k_1 + k_2)$, $\alpha_1 = c(k_2+k_3)$, $\alpha_0 = k_1k_2 + k_2k_3 + k_1k_3$, $\beta_3 = b$, $\beta_2 = 0$, and $\beta_1 = k_2 + k_3$. It is obvious that they are all non-negative and satisfy Condition~3. Conversely, if $\alpha_0$, $\alpha_1$, $\alpha_2$, $\alpha_3$, $\beta_1$, $\beta_2$, $\beta_3$ $\geq 0$ and they satisfy Condition~3, then letting $b=\beta_3$, $c = \alpha_3/\beta_3$, $k_1 = \alpha_2/\beta_3 - k_2$, $k_3 = \beta_1 - k_2$, and $k_2 = \sqrt{(\alpha_2\beta_1 - \alpha_0\beta_3)/\beta_3}$ yields that $b$, $c$ $> 0$, $k_1$, $k_2$, $k_3$ $\geq 0$, and \eqref{eq: third-configuration-non-well-defined} equals \eqref{eq: general admittance non-well-defined}.

\textit{The realizability condition of Fig.~\ref{fig: configuration-non-well-defined}(d):}
It is calculated that the admittance of the network shown in Fig.~\ref{fig: configuration-non-well-defined}(d) is expressed as
\begin{equation}  \label{eq: fourth-configuration-non-well-defined}
\begin{split}
&Y(s) = \\
&\frac{bcs^3 + b(k_2+k_3)s^2 + c(k_1+k_2)s + (k_1k_2 + k_2k_3 + k_1k_3)}{s\left( cs + (k_2+k_3) \right)},
\end{split}
\end{equation}
where $b$, $c$ $> 0$ and $k_1$, $k_2$, $k_3$ $\geq 0$.  If $Y(s)$ is realizable as in Fig.~\ref{fig: configuration-non-well-defined}(d), then it can also be expressed as \eqref{eq: general admittance non-well-defined}, with $\alpha_3 = bc$, $\alpha_2 = b(k_2+k_3)$, $\alpha_1 = c(k_1+k_2)$, $\alpha_0 = k_1k_2 + k_2k_3 + k_1k_3$, $\beta_3 = 0$, $\beta_2 = c$, and $\beta_1 = k_2+k_3$. It is obvious that they are all non-negative and satisfy Condition~4. Conversely, if $\alpha_0$, $\alpha_1$, $\alpha_2$, $\alpha_3$, $\beta_1$, $\beta_2$, $\beta_3$ $\geq 0$ and they satisfy Condition~4, then letting $b = \alpha_3/\beta_2$, $c = \beta_2$, $k_1 = \alpha_1/\beta_2 - k_2$, $k_3 = \beta_1 - k_2$, and $k_2 = \sqrt{(\alpha_1\beta_1 - \alpha_0\beta_2)/\beta_2}$ yields that $b$, $c$ $> 0$, $k_1$, $k_2$, $k_3$ $\geq 0$, and \eqref{eq: fourth-configuration-non-well-defined} equals \eqref{eq: general admittance non-well-defined}.

\textit{The realizability condition of Fig.~\ref{fig: configuration-non-well-defined}(e):}
It is calculated that the admittance of the network shown in Fig.~\ref{fig: configuration-non-well-defined}(e) is expressed as
\begin{equation}   \label{eq: fifth-configuration-non-well-defined}
\begin{split}
&Y(s) = \\
&\frac{bcs^3 + b(k_1+k_2)s^2 + c(k_1+k_3)s + (k_1k_2+k_2k_3+k_1k_3)}{s\left( bs^2 + cs + (k_2+k_3) \right)},
\end{split}
\end{equation}
where $b$, $c$ $\geq 0$ and $k_1$, $k_2$, $k_3$ $\geq 0$.  If $Y(s)$ is realizable as in Fig.~\ref{fig: configuration-non-well-defined}(e), then it can also be expressed as \eqref{eq: general admittance non-well-defined} with $\alpha_3 = bc$, $\alpha_2 = b(k_1+k_2)$, $\alpha_1 = c(k_1+k_3)$, $\alpha_0 = k_1k_2 + k_2k_3 + k_1k_3$, $\beta_3 = b$, $\beta_2 = c$, and $\beta_1 = k_2+k_3$. It is obvious that they are all non-negative and satisfy Condition~5. Conversely, if $\alpha_0$, $\alpha_1$, $\alpha_2$, $\alpha_3$, $\beta_1$, $\beta_2$, $\beta_3$ $\geq 0$ and they satisfy Condition~5, then letting $b = \beta_3$, $c = \beta_2$, $k_1 = ( \alpha_1\beta_3 + \alpha_2\beta_2  - \alpha_3\beta_1)/(2\alpha_3)$, $k_2 = ( \alpha_2\beta_2 - \alpha_1\beta_3  + \alpha_3 \beta_1)/(2\alpha_3)$, and $k_3 = ( \alpha_1\beta_3 - \alpha_2\beta_2  + \alpha_3\beta_1)/(2\alpha_3)$ yields that $b$, $c$ $> 0$, $k_1$, $k_2$, $k_3$ $\geq 0$, and \eqref{eq: fifth-configuration-non-well-defined} equals \eqref{eq: general admittance non-well-defined}.
\end{proof}

\section{Realizability Conditions of One-Port Networks with one damper, one inerter, and an Arbitrary Number of Springs in Terms of \eqref{eq: general admittance final}}

In \cite{CS09}, a necessary and sufficient condition has been derived for any positive-real function to be realizable as the admittance of the one-port network consisting of one damper, one inerter, and an arbitrary number of springs as shown in \cite[Fig.~5]{CS09} where  $X$ has a well-defined impedance. If, instead of the impedance, it is the admittance of the three-port network $X$ that is assumed to be well-defined (Assumption~1), then the obtained necessary and sufficient condition for the realization of the one-port network consisting of one damper, one inerter, and an arbitrary number of springs becomes that $Y(s)$ can be written in the form of \eqref{eq: Y general}, where  $G$ is non-negative definite and satisfies the conditions of either \cite[Lemma~3]{CS09} or \cite[Lemma~4]{CS09} with $R$ being replaced by $G$, the detail of which is listed as follows.

\begin{theorem}  \label{theorem: equivalent condition}
{A positive-real function $Y(s)$ is realizable as the driving-point admittance of the one-port network,  consisting of one damper, one inerter, and an arbitrary number of springs, and satisfying Assumption~1, if and only if $Y(s)$ can be written in the form of \eqref{eq: Y general}, where $G$ as defined in \eqref{eq: G} is non-negative definite and satisfies the conditions of Lemma~2 or one of the following five conditions:
\begin{itemize}
  \item[1.] $G_4G_5G_6 < 0$;
  \item[2.] $G_4G_5G_6 > 0$, $G_1 > (G_4G_5/G_6)$, $G_2 > (G_4G_6/G_5)$, and $G_3 > (G_5G_6/G_4)$;
  \item[3.] $G_4G_5G_6 > 0$, $G_3 < (G_5G_6/G_4)$, and $G_1G_2G_3 + G_4G_5G_6 - G_1G_6^2 - G_2G_5^2 \geq 0$;
  \item[4.] $G_4G_5G_6 > 0$, $G_2 < (G_4G_6/G_5)$, and $G_1G_2G_3 + G_4G_5G_6 - G_1G_6^2 - G_3G_4^2 \geq 0$;
  \item[5.] $G_4G_5G_6 > 0$, $G_1 < (G_4G_5/G_6)$, and $G_1G_2G_3 + G_4G_5G_6 - G_3G_4^2 - G_2G_5^2 \geq 0$.
\end{itemize}
}
\end{theorem}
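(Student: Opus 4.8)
The plan is to recognise Theorem~\ref{theorem: equivalent condition} as the exact admittance-side counterpart, valid under Assumption~1, of the impedance-based realizability theorem of \cite{CS09}, and to obtain it in three moves: first, reduce the one-port realizability problem to a three-port resistive-network realizability problem for the constant matrix $G$; second, characterise the latter by transferring \cite[Lemma~3]{CS09} and \cite[Lemma~4]{CS09} from the impedance matrix $R$ to the admittance matrix $G$ through the impedance--admittance duality of one-element-kind networks; and third, match the transferred conditions with the ``conditions of Lemma~2'' and with the five listed conditions by direct inspection.

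First I would carry out the reduction. Under Assumption~1 the spring sub-network $X$ has a well-defined admittance, which for a pure spring network must be $\tfrac{1}{s}G$ with $G$ a constant symmetric matrix as in \eqref{eq: G}. Connecting the one damper and one inerter to the ports of $X$ in the topology of \cite[Fig.~5]{CS09} produces a driving-point admittance of the form \eqref{eq: Y general}; this is exactly the content of Theorem~2 of \cite{WCZC15}, which already gives that $Y(s)$ is realizable by one damper, one inerter and an arbitrary number of springs (under Assumption~1) if and only if $Y(s)$ can be written as \eqref{eq: Y general} with $G$ non-negative definite and with $G$ realizable as the admittance of a three-port resistive network using an arbitrary number of resistors. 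The factor $\tfrac{1}{s}$ is the only effect of replacing resistors by springs, so by the one-element-kind analogy the realizability of $X$ is equivalent to the constant-matrix realizability of $G$. Note that here, unlike in Theorem~1, no bound on the number of elements is imposed, so the full (arbitrary-resistor) characterisation is the relevant one.

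The substantive step, and the one I expect to be the main obstacle, is the transfer of the conditions from $R$ to $G$. The theorem of \cite{CS09} was proved under the dual hypothesis that $X$ has a well-defined \emph{impedance} $sR$, and it characterises realizability in terms of $R$ being non-negative definite and satisfying \cite[Lemma~3]{CS09} or \cite[Lemma~4]{CS09}. I would argue that realizing a symmetric $3\times 3$ matrix as a resistive-network \emph{admittance} is the exact dual of realizing it as a resistive-network \emph{impedance}: interchanging resistors with conductances and series with parallel connections is an exact self-duality for one-element-kind networks, and the derivation in \cite{CS09} uses only such self-dual structural facts (paramountcy together with the tree- and minor-type constraints). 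Consequently every inequality in \cite[Lemmas~3 and~4]{CS09} holds verbatim with each entry of $R$ replaced by the corresponding entry of $G$, and the impedance-well-defined hypothesis is replaced by its dual, namely Assumption~1. The care required is precisely to verify that no step of \cite{CS09} breaks this symmetry and that Assumption~1 is the correct dual of the hypothesis used there; once this is confirmed the transferred statement reads: $Y(s)$ is realizable if and only if it has the form \eqref{eq: Y general} with $G$ non-negative definite and $G$ satisfying \cite[Lemma~3]{CS09} or \cite[Lemma~4]{CS09}, each read with $R$ replaced by $G$.

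Finally I would complete the bookkeeping. The non-negative-definiteness of $G$ is the hypothesis retained explicitly. The condition \cite[Lemma~3]{CS09} (with $R$ replaced by $G$) is, by inspection, identical to the ``conditions of Lemma~2'' of \cite{WCZC15}, while \cite[Lemma~4]{CS09} (with $R$ replaced by $G$) unfolds, upon writing out the second-order minors $G_1G_2-G_4^2$, $G_1G_3-G_5^2$, $G_2G_3-G_6^2$ and the quantities $G_4G_5/G_6$, $G_4G_6/G_5$, $G_5G_6/G_4$, into precisely the five itemised conditions, with the sign of $G_4G_5G_6$ selecting the branches and the displayed ternary determinantal expressions supplying the remaining inequalities. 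This matching is routine, so it is only the duality transfer of the previous paragraph that carries real content; granting it, the stated equivalence follows.
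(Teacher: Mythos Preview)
Your proposal is correct and follows essentially the same route as the paper: the paper does not give a formal proof of this theorem but simply observes, in the paragraph preceding it, that the result is the admittance-side dual of the impedance-based theorem in \cite{CS09}, obtained by replacing the well-defined-impedance hypothesis by Assumption~1 and the matrix $R$ by $G$. Your three moves (reduction via Theorem~2, duality transfer of \cite[Lemmas~3--4]{CS09}, and identification of the resulting conditions with Lemma~2 and the five listed cases) spell out exactly this argument in more detail than the paper does.
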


It can be checked that $G_1G_2G_3 + G_4G_5G_6 - G_1G_6^2 - G_2G_5^2 = 0$, $G_1G_2G_3 + G_4G_5G_6 - G_1G_6^2 - G_3G_4^2 = 0$, and $G_1G_2G_3 + G_4G_5G_6 - G_2G_5^2 - G_3G_4^2 = 0$ imply $G_3 < (G_5G_6/G_4)$, $G_2 < (G_4G_6/G_5)$, and $G_1 < (G_4G_5/G_6)$, respectively. Hence, the condition of Theorem~2 is a proper subset of that of Theorem~\ref{theorem: equivalent condition}.

Let $m_1 = G_6$, $m_1^\dag = G_5$, $m_2 = G_2 - G_4G_6/G_5$, $m_2^\dag = G_1 - G_4G_5/G_6$, and $m_3 = G_3 - G_5G_6/G_4$.
Let $\lambda_1 = G_1G_2G_3 + G_4G_5G_6 - G_1G_6^2 - G_2G_5^2$, $\lambda_2 = G_1G_2G_3 + G_4G_5G_6 - G_1G_6^2 - G_3G_4^2$,   $\lambda_3 = G_1G_2G_3 + G_4G_5G_6 - G_3G_4^2 - G_2G_5^2$, and $\lambda_4 = \det(G)$.
For the admittance as in \eqref{eq: Y general}, a $G_5$--$G_6$ graph is as in Fig.~\ref{fig: G5G6} by letting $G_1 = G_2 = G_3 = 1$ and $G_4 = 0.5$ to show   realizable sets  when Assumption~1 holds. The shaded region represents the realizability condition for the networks with one damper, one inerter, and an arbitrary number of springs; the boundaries of the shaded region ($\lambda_1$, $\lambda_2$, $\lambda_3$ $=0$, and $\lambda_4$ $= 0$ for $G_4G_5 < 0$), and the line segments inside the shaded region ($m_1$, $m_1^{\dag}$, $m_2$, $m_2^{\dag}$, $m_3$ $= 0$) constitute the realizability condition for networks with one damper, one inerter, and at most three springs.

\begin{figure}[thpb]
      \centering
      \includegraphics[scale=0.6]{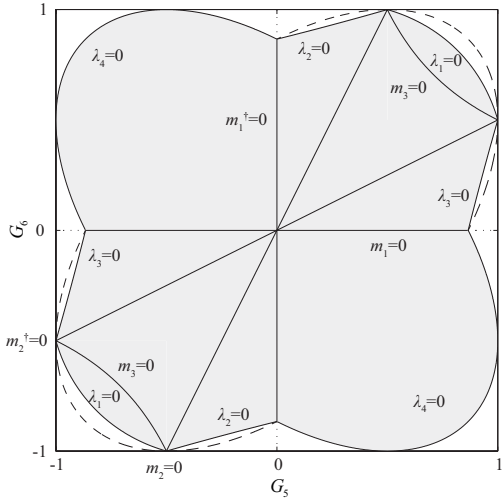}
      \caption{The $G_5$--$G_6$ graph showing realizable sets  assuming Assumption~1 holds, where $G_1 = G_2 = G_3 = 1$ and $G_4 = 0.5$.}
      \label{fig: G5G6}
\end{figure}

Using the coefficient transforation, one can further derive the following conclusion.

\begin{theorem}  \label{theorem: further equivalent condition}
{A positive-real function $Y(s)$ can be realized as the
driving-point admittance of a one-port network, consisting of one damper, one inerter, and an arbitrary number of springs, and satisfying Assumption~1, if and only if $Y(s)$ can be written in the form of \eqref{eq: Y initial},
where the coefficients satisfy $\alpha_0$, $\alpha_1$, $\alpha_2$, $\alpha_3$, $\beta_1$, $\beta_2$, $\beta_3$ $\geq 0$, $W_1$, $W_2$, $W_3$ $\geq 0$, $W^2 = 4W_1W_2W_3$ and also satisfy the condition
of Lemma~\ref{lemma: condition 01} or one of the following five conditions:
\begin{enumerate}
  \item[1.] $W < 0$;
  \item[2.] $W > 0$, $\alpha_3 - W/(2W_3) > 0$, $\beta_3 - W/(2W_2)$, and $\beta_2 - W/(2W_1) > 0$;
  \item[3.] $W > 0$, $\beta_2 - W/(2W_1) < 0$, and $\alpha_0 + \alpha_3\beta_1 + \alpha_1\beta_3 - \alpha_2\beta_2 \geq 0$;
  \item[4.] $W > 0$, $\beta_3 - W/(2W_2) < 0$, and $\alpha_0 + \alpha_3\beta_1 + \alpha_2\beta_2 - \alpha_1\beta_3 \geq 0$;
  \item[5.] $W > 0$, $\alpha_3 - W/(2W_3) < 0$, and $\alpha_0 + \alpha_1\beta_3 + \alpha_2\beta_2 - \alpha_3\beta_1 \geq 0$.
\end{enumerate}
}
\end{theorem}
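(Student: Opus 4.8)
The plan is to prove Theorem~\ref{theorem: further equivalent condition} by translating Theorem~\ref{theorem: equivalent condition}, which is phrased in terms of the entries of the matrix $G$, into the coefficient variables $\alpha_i$, $\beta_j$, $W$, $W_1$, $W_2$, $W_3$ via the dictionary \eqref{eq: matrix condition}. No new synthesis is required; the content is an exact change of variables between the two descriptions of $Y(s)$.

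First I would apply Lemma~\ref{lemma: one funtion be expressed by the other} to record that a positive-real $Y(s)$ admits the representation \eqref{eq: Y general} with non-negative definite $G$ as in \eqref{eq: G} if and only if it admits the representation \eqref{eq: Y initial} with $\alpha_0, \ldots, \beta_3 \geq 0$, $W_1, W_2, W_3 \geq 0$, and $W^2 = 4W_1W_2W_3$. Since this is an equivalence of the two functional forms, the realizability problem collapses to rewriting the two side-conditions of Theorem~\ref{theorem: equivalent condition} in coefficient language.

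The substitution reuses identities already in the excerpt. The clause ``$G$ satisfies the conditions of Lemma~2'' --- that some first- or second-order minor of $G$ vanishes --- is, by Lemma~\ref{lemma: condition 01}, exactly equivalent to the vanishing of one of the twelve coefficient quantities listed there, giving Condition~1 of the theorem. For the five remaining conditions I would use $2G_4G_5G_6 = W$ to turn $G_4G_5G_6 < 0$ into $W < 0$ and $G_4G_5G_6 > 0$ into $W > 0$; the three minor identities from the proof of Lemma~\ref{lemma: condition 01} to turn $G_1 - G_4G_5/G_6$, $G_2 - G_4G_6/G_5$, $G_3 - G_5G_6/G_4$ into $\alpha_3 - W/(2W_3)$, $\beta_3 - W/(2W_2)$, $\beta_2 - W/(2W_1)$; and the three determinant identities from the proof of Lemma~\ref{lemma: condition 02} to turn the third-order expression $G_1G_2G_3 + G_4G_5G_6 - G_1G_6^2 - G_2G_5^2$ into $\tfrac{1}{2}(\alpha_0 + \alpha_3\beta_1 + \alpha_1\beta_3 - \alpha_2\beta_2)$ and likewise for its two companions. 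Reading these off condition by condition reproduces Conditions~1--5 verbatim.

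The step I expect to be the main obstacle is not the algebra but the bookkeeping around degeneracies. The minor identities and the sign correspondence between $G_4G_5G_6$ and $W$ presuppose $G_4G_5G_6 \neq 0$, i.e.\ $W_1, W_2, W_3 > 0$; when some $W_i = 0$ the matching $G_i$ vanishes, a minor is zero, and $Y(s)$ already falls under the Lemma~\ref{lemma: condition 01} clause. I would therefore argue that Conditions~1--5 need only be tested on the complement of the Lemma~\ref{lemma: condition 01} set --- precisely the regime in which Lemma~\ref{lemma: condition 02} was stated --- so that every division is legitimate and the strict versus non-strict inequalities transfer without change. Since Lemma~\ref{lemma: one funtion be expressed by the other} supplies an equivalence of forms and each translated condition is itself an equivalence, taking the union over all cases yields the stated biconditional.
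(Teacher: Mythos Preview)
Your proposal is correct and follows exactly the approach the paper indicates: the paper does not give a detailed proof of Theorem~\ref{theorem: further equivalent condition} but merely says ``Using the coefficient transformation, one can further derive the following conclusion,'' and your plan---invoking Lemma~\ref{lemma: one funtion be expressed by the other} for the equivalence of the two functional forms, Lemma~\ref{lemma: condition 01} for the first clause, and the identities from the proofs of Lemmas~\ref{lemma: condition 01} and~\ref{lemma: condition 02} to translate Conditions~1--5 of Theorem~\ref{theorem: equivalent condition}---is precisely that transformation carried out in full. Your remark that the degenerate cases $W_i=0$ are absorbed into the Lemma~\ref{lemma: condition 01} clause, so that the divisions in Conditions~2--5 are legitimate, is the only subtlety and you have handled it correctly.
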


Since $\beta_2 - W/(2W_1) = -\alpha_0/W_1$ when $\alpha_0 + \alpha_3\beta_1 + \alpha_1\beta_3 - \alpha_2\beta_2 = 0$, $\beta_3 - W/(2W_2) = -\alpha_0/W_2$ when $\alpha_0 + \alpha_3\beta_1 + \alpha_2\beta_2 - \alpha_1\beta_3 = 0$, and $\alpha_3 - W/(2W_3) = -\alpha_0/W_3$ when $\alpha_0 + \alpha_1\beta_3 + \alpha_2\beta_2 - \alpha_3\beta_1 = 0$, the condition of
Theorem~5 is a proper subset of that of Theorem~\ref{theorem: further equivalent condition}.

From the proof of Theorem~6, one can also see that if $Y(s)$ is realizable as a one-port network with one damper, one inerter, and an arbitrary number of springs, where $X$ does not have a
well-defined admittance, then the number of springs can be at most three. Hence, the result is obtained as follows.
\begin{theorem}    \label{theorem: equivalent final result}
{A positive-real function $Y(s)$ can be realized as the
driving-point admittance of a one-port network consisting of one inerter, one damper, and  an arbitrary number of springs, if and only if $Y(s)$ can be written in the form of
\begin{equation}  \label{eq: general admittance final}
Y(s)=\frac{\alpha_3s^3+\alpha_2s^2+\alpha_1s+\alpha_0}{\beta_4s^4 + \beta_3s^3 + \beta_2s^2+\beta_1s},
\end{equation}
where $\alpha_0$, $\alpha_1$, $\alpha_2$, $\alpha_3$, $\beta_1$, $\beta_2$, $\beta_3$ $\geq 0$, and they satisfy the condition of Theorem~\ref{theorem: further equivalent condition} when $\beta_4 = 1$ or the condition of Theorem~6 when $\beta_4 = 0$. }
\end{theorem}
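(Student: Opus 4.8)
The plan is to prove the equivalence by splitting on whether the internal three-port spring network $X$ has a well-defined admittance, i.e.\ whether Assumption~1 holds; these two mutually exclusive alternatives will account precisely for the two branches $\beta_4 = 1$ and $\beta_4 = 0$ of the claimed form \eqref{eq: general admittance final}. Every one-port network built from one inerter, one damper, and an arbitrary number of springs contains such an $X$, so exactly one of the two alternatives occurs for any given realization, and conversely every admissible coefficient vector falls under one branch. I would therefore establish necessity and sufficiency by extracting the corresponding statement from Theorem~\ref{theorem: further equivalent condition} and from Theorem~6 and checking that the denominators line up with the two values of $\beta_4$.

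First I would treat the case in which Assumption~1 holds. Here Theorem~\ref{theorem: further equivalent condition} already supplies the exact necessary and sufficient condition for realizability by one damper, one inerter, and an arbitrary number of springs, writing $Y(s)$ in the form \eqref{eq: Y initial}. Since the denominator of \eqref{eq: Y initial} is the monic polynomial $s^4 + \beta_3 s^3 + \beta_2 s^2 + \beta_1 s$, it coincides with the denominator of \eqref{eq: general admittance final} upon setting $\beta_4 = 1$. Thus realizability under Assumption~1 is equivalent to \eqref{eq: general admittance final} with $\beta_4 = 1$ together with the conditions of Theorem~\ref{theorem: further equivalent condition}, which is exactly the first alternative of the statement.

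The case in which Assumption~1 fails is where the only genuinely new ingredient enters, and I expect the spring-count reduction to be the main obstacle. Theorem~6 characterizes realizability by one damper, one inerter, and \emph{at most three} springs when $X$ has no well-defined admittance, producing the degree-three form \eqref{eq: general admittance non-well-defined}, whose denominator $\beta_3 s^3 + \beta_2 s^2 + \beta_1 s$ matches that of \eqref{eq: general admittance final} with $\beta_4 = 0$. To promote this from ``at most three springs'' to ``an arbitrary number of springs'' I would invoke the structural fact read off from the proof of Theorem~6: when $X$ is not well-defined, the well-definedness criterion (Lemma~\ref{lemma: well-defined}) forces the inerter, damper, and port edges to share a circuit, and the generalized star-mesh transformation together with the two-port spring reduction (Lemma~\ref{lemma: two-port}) collapse every such network to one of the five configurations of Fig.~\ref{fig: configuration-non-well-defined}, each using at most three springs. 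Hence, in this case, allowing arbitrarily many springs yields no additional admittances, so Theorem~6 is already exhaustive and the failure of Assumption~1 is equivalent to the $\beta_4 = 0$ alternative.

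Finally I would assemble the two directions. For necessity, any realization falls into exactly one of the two cases above, so its admittance satisfies \eqref{eq: general admittance final} with either $\beta_4 = 1$ (and the conditions of Theorem~\ref{theorem: further equivalent condition}) or $\beta_4 = 0$ (and the conditions of Theorem~6). For sufficiency, a given $Y(s)$ in the stated form with $\beta_4 = 1$ is realizable by Theorem~\ref{theorem: further equivalent condition}, while one with $\beta_4 = 0$ is realizable by Theorem~6; either way it is realizable by the desired class of networks. The disjunction over $\beta_4$ taking the values $0$ and $1$ thus reproduces exactly the union of the two realizability regions, completing the proof. The only care required is in verifying that the denominator normalizations (the monic $\beta_4 = 1$ form versus the $\beta_4 = 0$ form) genuinely partition the two cases, so that no well-defined realization is mistakenly absorbed into the $\beta_4 = 0$ branch or vice versa.
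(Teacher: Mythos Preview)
Your proposal is correct and follows essentially the same approach as the paper. The paper's argument is extremely terse---it simply remarks that from the proof of Theorem~6 one sees that, when $X$ fails to have a well-defined admittance, the number of springs can be reduced to at most three, and then states the theorem---and your write-up expands exactly this reasoning: split on Assumption~1, invoke Theorem~\ref{theorem: further equivalent condition} for the $\beta_4=1$ branch, and use the structural reduction implicit in the proof of Theorem~6 (circuit forced among the port, damper, and inerter edges, then star-mesh and Lemma~\ref{lemma: two-port}) to justify that Theorem~6 already covers the $\beta_4=0$ branch for arbitrarily many springs.
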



%



\begin{thebibliography}{99}



\bibitem{WCZC15}
M.~Z.~Q. Chen, K. Wang, Y. Zou, and G. Chen, ``Realization of three-port spring networks with inerter for effective mechanical control,'' \textit{IEEE Trans. Automatic Control}, vol.~60, no.~10, pp.~2722--2727, 2015.


\bibitem{Che07}
M. Z. Q. Chen, \textit{Passive Network Synthesis of Restricted Complexity}, Ph.D. Thesis, Cambridge Univ. Eng. Dept., U.K., 2007.

\bibitem{CS08}
M.~Z.~Q. Chen and M.~C. Smith, ``Electrical and mechanical passive network synthesis,'' in \textit{Recent Advances in Learning and Control}, V. D. Blondel,  S. P. Boyd, and H. Kimura (Eds.), New York: Springer-Verlag, 2008, LNCIS, vol.~371, pp.~35--50.

\bibitem{Che08}
M.~Z.~Q. Chen, ``A note on PIN polynomials and PRIN rational functions,'' \textit{IEEE Trans. Circuits and Systems II: Express Briefs}, vol.~55, no.~5, pp.~462--463, 2008.

\bibitem{CS09}
M.~Z.~Q. Chen and M.~C. Smith, ``Restricted complexity network realizations for passive mechanical control,'' \textit{IEEE Trans. Automatic Control}, vol.~54, no.~10, pp.~2290--2301, 2009.

\bibitem{CS09(2)}
M.~Z.~Q. Chen and M.~C. Smith, ``A note on tests for positive-real functions,'' \textit{IEEE Trans. Automatic Control}, vol.~54, no.~2, pp.~390--393, 2009.

\bibitem{CPSWS09}
M.~Z.~Q. Chen, C. Papageorgiou, F. Scheibe, F.-C. Wang, and M.~C.
Smith, ``The missing mechanical circuit element,'' \textit{IEEE Circuits Syst. Mag.}, vol.~9, no.~1, pp.~10--26, 2009.


\bibitem{CWZL12}
M.~Z.~Q. Chen, K. Wang, Y. Zou, and J. Lam, ``Realization of a special class of admittances with one damper and one inerter,'' in \textit{Proceedings of the 51st IEEE Conference on Decision and Control}, 2012, pp.~3845--3850.


\bibitem{CWZL13}
M.~Z.~Q. Chen, K. Wang, Y. Zou, and J. Lam, ``Realization of a special class of admittances with one damper and one inerter for mechanical control,'' \textit{IEEE Trans. Automatic Control}, vol.~58, no.~7, pp.~1841--1846, 2013.

\bibitem{CWYLZC13}
M.~Z.~Q. Chen, K. Wang, M. Yin, C. Li, Z. Zuo, and G. Chen,
``Realizability of $n$-port resistive networks with $2n$ terminals,''
in \textit{Proceedings of the 9th Asian Control Conference}, 2013, pp.~1--6.


\bibitem{CWYLZC14}
M.~Z.~Q. Chen, K. Wang, M. Yin, C. Li, Z. Zuo, and G. Chen,
``Synthesis of $n$-port resistive networks containing $2n$ terminals,'' \textit{International Journal of Circuit Theory and Applications}, vol.~43, no.~4, pp.~427--437, 2015.

\bibitem{CWLC14}
M.~Z.~Q. Chen, K. Wang, C. Li, and G. Chen, ``Realizations of biquadratic impedances as five-element bridge networks containing one inductor and one capacitor,'' in \textit{Proceedings of 33rd Chinese Control Conference},
2014, pp.~7498--7503.


\bibitem{Che14}
M.~Z.~Q. Chen, ``The classical $n$-port resistive synthesis problem,'' in \textit{Workshop on ``Dynamics and Control in Networks''}, Lund University, 2014 (http://www.lccc.lth.se/media/2014/malcolm3.pdf, last accessed on 19/01/2015).




\bibitem{CHD12}
M.~Z.~Q. Chen, Y.~Hu, and B.~Du, ``Suspension performance with one damper and one inerter,''  in \textit{Proceedings of the 24th Chinese Control and Decision Conference}, Taiyuan, China, 2012, pp.~3534--3539.

\bibitem{CHHC14}
M.~Z.~Q. Chen, Y. Hu, L. Huang, and G. Chen, ``Influence of inerter on natural frequencies of vibration systems,'' \textit{Journal of Sound and Vibration}, vol.~333, no.~7, pp.~1874--1887, 2014.

\bibitem{CHLC14}
M.~Z.~Q. Chen, Y. Hu, C. Li, and G. Chen, ``Performance benefits of using inerter in semiactive suspensions,'' \textit{IEEE Trans. Control Systems Technology}, vol.~23, no.~4, pp.~1571--1577, 2015.

\bibitem{CHW15}
M.~Z.~Q. Chen, Y. Hu, F.-C. Wang, ``Passive mechanical control with a special class of positive real controllers: application to passive vehicle suspensions,'' \textit{Journal of Dynamic Systems, Measurement, and Control},
vol.~137, no.~12, pp.~121013-1--121013-11, 2015.



\bibitem{HCS14}
Y. Hu, M.~Z.~Q. Chen, and Z. Shu, ``Passive vehicle suspensions employing inerters with multiple performance requirements,'' \textit{Journal of Sound and Vibration}, vol.~333, no.~8,
pp.~2212--2225, 2014.

\bibitem{HCSH14}
Y. Hu, M.~Z.~Q. Chen, Z. Shu, and L. Huang, ``Vibration analysis for isolation system with inerter,'' in {\it Proceedings of the 33rd Chinese Control Conference}, 2014, pp.~6687--6692.

\bibitem{HWC15}
Y. Hu, K. Wang, and M.~Z.~Q. Chen, ``Semi-active suspensions with low-order mechanical admittances incorporating inerters,'' in {\it Proceedings of the 27th Chinese Control and Decision Conference}, 2015, pp.~79--84.


\bibitem{HC15}
Y. Hu and M.~Z.~Q. Chen, ``Performance evaluation for inerter-based dynamic vibration absorbers,'' \textit{International Journal of Mechanical Sciences}, vol.~99, pp.~297--307, 2015.

\bibitem{HDC15}
Y. Hu, H. Du, M.~Z.~Q. Chen, ``An inerter-based electromagnetic device and its application in vehicle suspensions,'' in {\it Proceedings of the 34th Chinese Control Conference}, 2015, pp.~2060--2065.

\bibitem{HWC15}
Y. Hu, K. Wang, and M.~Z.~Q. Chen, ``Performance optimization for passive suspensions with one damper one inerter and three springs,'' in \textit{Proceeding of the 2015 IEEE International Conference on Information and Automation}, 2015, pp.~1349--1354.


\bibitem{LHC15}
Y. Liu, Y. Hu, and M.~Z.~Q. Chen, ``Effect of play in inerter on vehicle suspension system,'' in \textit{Proceedings of the 27th Chinese Control and Decision Conference}, 2015, pp.~2497--2502.


\bibitem{LHC15}
Y. Liu, Y. Hu, and M.~Z.~Q. Chen, ``Effect of play in inerter on vehicle suspension system,'' in {\it Proceedings of the 34th Chinese Control Conference}, 2015, pp.~2497--2502.

\bibitem{LCT15}
Y. Liu, M.~Z.~Q. Chen, Y. Tian, ``Nonlinearities in landing gear model incorporating inerter,'' in \textit{Proceeding of the 2015 IEEE International Conference on Information and Automation}, 2015, pp.~696--701.





\bibitem{WC12_conf}
K. Wang and M.~Z.~Q. Chen, ``Realization of biquadratic impedances with at most four elements,''  in {\it Proceedings of the 24th Chinese Control and Decision Conference}, 2012, pp.~2900--2905.


\bibitem{WC12}
K. Wang and M.~Z.~Q. Chen, ``Generalized series-parallel RLC synthesis without minimization for biquadratic impedances,''
\textit{IEEE Trans. Circuits and Systems II: Express Briefs}, vol.~59, no.~11, pp.~766--770, 2012.

\bibitem{WCH14}
K. Wang, M.~Z.~Q. Chen, and Y. Hu, ``Synthesis of biquadratic impedances with at most four passive elements,'' \textit{Journal of the Franklin Institute}, vol.~351, no.~3, pp.~1251--1267, 2014.

\bibitem{WC15}
K. Wang and M.~Z.~Q. Chen, ``Minimal realizations of three-port resistive networks,'' \textit{IEEE Trans. Circuits and Systems I: Regular Papers}, vol.~62, no.~4, pp.~986--994, 2015.

\bibitem{WC15}
K. Wang and M.~Z.~Q. Chen, ``Synthesis of transfer functions as two-port RC ladder networks with specified gains,'' in {\it Proceedings of the 27th Chinese Control and Decision Conference}, 2015, pp.~101--106.


\bibitem{Smi02}
M. C. Smith, ``Synthesis of mechanical networks:
the inerter,'' \textit{IEEE Trans. Automatic Control}, vol.~47, no.~10,
pp.~1648--1662, 2002.





\bibitem{Boe66}
F.~T. Boesch, ``Some aspects of the analysis and synthesis of resistor networks,'' in \textit{Text of Lectures Presented at NATO Advanced Study Inst. Network and Switching Theory}, Sept. 1966, pp.~188--204.


\bibitem{BT61}
D.~P. Brown and Y. Tokad, ``On the synthesis of R networks,''
\textit{IRE Trans. Circuit Theory}, vol.~8, no.~1, pp.~31--39, 1961.










\bibitem{CS09}
M.~Z.~Q. Chen and M.~C. Smith, ``Restricted complexity network
realizations for passive mechanical control,'' \textit{IEEE Trans. Automatic Control}, vol.~54, no.~10, pp.~2290--2301, 2009.

























































\bibitem{SR61}
S. Seshu and M.~B. Reed, {\em Linear Graphs and Electrical Networks}. Boston: Addison-Wesley, 1961.


\bibitem{SW58}
P. Slepian and L. Weinberg, ``Synthesis applications of paramount and dominant matrices,'' in \textit{Proceedings of Nat. Elec. Conf}, vol.~14, pp.~611--630, 1958.

\bibitem{Ver70}
L. Versfeld, ``Remarks on star-mesh transformation of electrical
networks,'' \textit{Electronics Letters}, vol.~6, no.~19, pp.~597--599, 1970.







\end{thebibliography}
\end{document}